\documentclass[a4paper,11pt,twoside]{amsart}

\usepackage{amsmath}
\usepackage{amsthm}
\usepackage{amsfonts}
\usepackage{amssymb}
\usepackage{mathrsfs}
\usepackage{graphicx}
\usepackage{enumerate}
\usepackage{url}

\usepackage{tikz}

%
%

\usepackage[labelformat=simple]{subcaption}

\usetikzlibrary{arrows}
\usetikzlibrary{decorations.markings}


\setlength{\textwidth}{15cm} \setlength{\topmargin}{0cm}
\setlength{\oddsidemargin}{.5cm} \setlength{\evensidemargin}{.5cm}
\setlength{\textheight}{21.5cm}

\theoremstyle{plain}
\newtheorem{lemma}{Lemma}[section]
\newtheorem{prop}[lemma]{Proposition}
\newtheorem{theo}[lemma]{Theorem}
\newtheorem{coro}[lemma]{Corollary}
\theoremstyle{remark}
\newtheorem{rem}[lemma]{Remark}

\theoremstyle{definition}
\newtheorem{definition}[lemma]{Definition}
\newtheorem{ex}[lemma]{Example}

\newcommand{\op}{\textnormal{op}}
\newcommand{\id}{\mathrm{Id}}
\newcommand{\F}{\mathscr{F}}
\newcommand{\N}{\mathbb{N}}
\newcommand{\tq}{\;/\;}
\renewcommand{\S}{\mathcal{S}}
\newcommand{\fintop}{\text{\bf FinTop}}
\newcommand{\fintopcero}{\text{\bf FinTop}_{\bf 0}}
\DeclareMathOperator{\im}{Im}

\begin{document}
\title[Cofibrations between finite topological spaces]{A combinatorial characterization of Hurewicz cofibrations between finite topological spaces}

\author{Nicol\'as Cianci}
\address{Facultad de Ciencias Exactas y Naturales \\ Universidad Nacional de Cuyo and CONICET \\ Mendoza, Argentina.}
\email{nicocian@gmail.com}

\author{Miguel Ottina}
\address{Facultad de Ciencias Exactas y Naturales \\ Universidad Nacional de Cuyo \\ Mendoza, Argentina.}
\email{mottina@fcen.uncu.edu.ar}

\subjclass[2010]{Primary: 06A07; 55P05. Secondary: 54C15.}

\keywords{Finite topological space, Finite poset, Hurewicz cofibration, Retraction.}

\thanks{Research partially supported by grants M015 (2013--2015) and M044 (2016--2018) of SeCTyP, UNCuyo. The first author was also partially supported by a CONICET doctoral fellowship.}

\begin{abstract}
We characterize the Hurewicz cofibrations between finite topological spaces, that is, the continuous functions between finite topological spaces that have the homotopy extension property with respect to all topological spaces. In particular, we show that cofibrations between connected non-empty finite topological spaces are homotopy equivalences.

As a consequence of our characterization, we obtain a simple algorithm capable of determining whether a given continuous function between finite topological spaces is a cofibration.
\end{abstract}

\maketitle

\section{Introduction}

Finite topological spaces are naturally endowed with a very interesting combinatorial flavour that is based in the well-known bijective correspondence between topologies in a finite set $X$ and preorders in $X$ given by Alexandroff \cite{alexandroff1937diskrete}. Under this bijection T$_0$ topologies correspond to partial orders. Moreover, any finite T$_0$--space is weak homotopy equivalent to the geometric realization of the order complex of its associated poset \cite{mccord1966singular}. These results permit the study of homotopy properties of the order complex of a finite poset by means of its associated finite T$_0$--space, which has proved to be very fruitful \cite{barmak2011algebraic}. Indeed, the theory of finite topological spaces gives interesting tools to study posets and polyhedra.

A natural question to pose in the homotopy theory of finite spaces is which maps between finite spaces are Hurewicz cofibrations. For example, it is easy to prove that if $\S=\{0,1\}$ is the Sierpinski space (where $\{0\}$ is the only non-trivial open set of $\S$), then the inclusion $\{0\}\hookrightarrow \S$ is a (Hurewicz) cofibration but the inclusion $\{1\}\hookrightarrow \S$ is not. The main result of this article can be seen as both an explanation and an extensive generalization of this fact, and allows us to completely characterize the cofibrations between finite (not necessarily T$_0$) spaces. In particular, we obtain the unexpected results that cofibrations between connected non-empty finite spaces are homotopy equivalences and that closed cofibrations between connected non-empty finite spaces are homeomorphisms.

Our characterization of cofibrations between finite spaces is purely combinatorial and surprisingly simple: if $X$ is a connected finite topological space and $A\subseteq X$ is a non-empty subspace then the inclusion map $i\colon A\hookrightarrow X$ is a cofibration if and only if there exists a retraction $r\colon X \to A$ of $i$ such that $ir\leq \id_X$. Moreover, we prove that for finite T$_0$--spaces such a retraction exists if and only if the subspace $A$ is, in our terminology, a \emph{dbp--retract} of $X$, that is, the subspace $A$ can be obtained from $X$ by successively removing \emph{down beat points} (which are points that have exactly one lower cover). In addition, we prove that a inclusion map $i\colon A\hookrightarrow X$ as above is a cofibration if and only if the induced map between the Kolmogorov quotients of $A$ and $X$ is a cofibration. As a corollary of these results we obtain a simple algorithm for determining whether a continuous map between finite topological spaces is a cofibration.

Dbp--retracts turn out to have very interesting properties which are studied in section \ref{sec_bpr} of this article. For example, for any finite T$_0$--space $X$, the set $\Omega(X,\varnothing)$ of dbp--retracts of $X$ is in one-to-one correspondence with the set 
\begin{displaymath}
	\F(X,\varnothing)=\{f\in X^{X}/f\leq\id_X\text{ and }f^{2}=f\},
\end{displaymath}
where $X^{X}$ is the space of continuous functions from $X$ to itself.

The set $\F(X,\varnothing)$ is not closed under compositions, but is closed under the operation $*$ defined by $f*g=(fg)^{\infty}$, where $f^{\infty}$ denotes the composition of $f$ with itself a sufficiently large amount of times. It is not hard to see that $*$ turns $\F(X,\varnothing)$ into an abelian monoid, which is a consequence of the fact that the set $\Omega(X,\varnothing)$ is closed under intersections. In particular, every finite T$_0$--space $X$ has a unique minimal dbp--retract. This means that successively removing down beat points of $X$ furnishes a space without down beat points which is independent of the order in which the down beat points of $X$ (and the successively obtained subspaces) are removed.

\section{Preliminaries}
For every finite topological space $X$ and for every $x\in X$, there is a minimal open set that contains $x$, which will be denoted by $U^{X}_x$ (or by $U_x$, if the space $X$ is understood). The space $U^{X}_x-\{x\}$ will be denoted by $\widehat{U}^{X}_x$.

If $X$ is a finite topological space, a preorder $\leq$ is defined on $X$ as follows: for $x,y\in X$,
\begin{displaymath}
	 x\leq y \text{ if and only if } U_x\subseteq U_y.   
\end{displaymath}
Note that $\leq$ is nothing but the specialization preorder in $X$, that is, $x\leq y$ if and only if $y$ is in the closure of $\{x\}$. Thus, $\leq$ is an order if and only if $X$ is a T$_0$--space.
On the other hand, if $X$ is a finite preordered set, the lower sets of $X$ form a topology on $X$. It is well known that these constructions are mutually inverse and provide a functorial bijective correspondence between finite spaces and finite preordered sets that restricts to a bijective correspondence between finite T$_0$--spaces and finite posets \cite{alexandroff1937diskrete}.  
Hence, from now on, every finite space will be considered as a finite preordered set, every finite T$_0$--space will be considered as a finite poset and every continuous function between finite spaces will be considered as an morphism of preordered sets without further notice. 

Let $X$ be a finite space considered as a preordered set with preorder $\leq$. The preorder $\leq^{\op}$ is defined on $X$ by 
	\begin{displaymath}
		x\leq^{\op} y \text{ if and only if } y\leq x
	\end{displaymath}
for $x,y\in X$. The preorder $\leq^{\op}$ induces a topological space $X^{\op}$ with the same underlying set as $X$ but whose open sets are the closed sets of $X$. A continuous function $f\colon X\to Y$ between finite spaces can be regarded as  a continuous function $f^{\op}\colon X^{\op}\to Y^{\op}$.  

Now, if $X$ is a finite space, an equivalence relation $\sim$ on $X$ is defined by
\begin{displaymath}
	 x\sim y \text{ if and only if } U_x=U_y.   
\end{displaymath}
Hence, $x\sim y$ if and only if $x\leq y$ and $y\leq x$. We will denote $X/\sim$ by $X_0$ and the canonical quotient map $X\to X_0$ by $q_X$. Note that every function $j\colon X_0\to X$ such that $j(q_X(x))\sim x$ for all $x\in X$ is continuous (since $jq_X$ is order preserving) and hence a section of $q_X$. 
It is easy to see that $X_0$ is a T$_0$--space and that $x\leq x'$ in $X$ if and only if $q_X(x)\leq q_X(x')$ in $X_0$ \cite{stong1966finite,barmak2011algebraic}.

Observe that a continuous function $f\colon X\to Y$ induces a unique continuous function $f_0\colon X_0\to Y_0$ such that $f_0q_X=q_Yf$. The assignments $X\mapsto X_0$ and $f\mapsto f_0$ define a functor $T$ from the category $\fintop$ of finite topological spaces (and continuous functions) to the category $\fintopcero$ of finite T$_0$--spaces (and continuous functions). The functor $T$ is left-adjoint to the inclusion functor $i\colon \fintopcero\to\fintop$.

For a finite T$_0$--space $X$ and elements $x,y$ in $X$, we will write $x<y$ when $x\leq y$ and $x\neq y$. We will also write $x\geq y$ if $y\leq x$ and $x>y$ if $y<x$.

If $X$ and $Y$ are finite spaces, the space of continuous functions from $X$ to $Y$ (equipped with the compact-open topology) will be denoted by $Y^{X}$. 
Note that, in this case, $Y^{X}$ is a finite topological space, and for any two continuous functions $f,g\in Y^{X}$ we have that $f\leq g$ in $Y^{X}$ if and only if $f(x)\leq g(x)$ for every $x$ in $X$ \cite{stong1966finite}.
Thus, for every $f,g\in Y^{X}$, we have that $f\leq g$ in $Y^{X}$ if and only if $f_0\leq g_0$ in $Y_0^{X_0}$.

In \cite{stong1966finite}, R. E. Stong also proved that a finite space is connected if and only if it is connected when considered as a preordered set, and if and only if it is path-connected. In the same work, it is proved that if $X$ and $Y$ are finite topological spaces and $f,g\in Y^{X}$ are such that $f\leq g$, then $f$ is homotopic to $g$ relative to the set $\{x\in X:f(x)=g(x)\}$.

Let $X$ be a finite T$_0$--space and let $x\in X$. As in \cite{barmak2011algebraic}, we say that $x$ is a \emph{down beat point} of $X$ if the set $\widehat{U}^{X}_x=\{z\in X:z<x\}$ has a maximum, or equivalently, if the element $x$ has exactly one lower cover.

Similarly, we say that $x$ is an \emph{up beat point} of $X$ if the set $\{z\in X:z>x\}$ has a minimum.

We say that $x$ is a \emph{beat point} of $X$ if it is either a down beat point of $X$ or an up beat point of $X$.

\begin{rem}[Stong, \cite{stong1966finite}]
	Let $X$ be a finite T$_0$--space and let $x$ be a down beat point of $X$. Let $i\colon X-\{x\}\to X$ be the inclusion and let $r\colon X\to X-\{x\}$ be the function defined by
	\begin{displaymath}
		r(z)=\begin{cases}z&\text{if $z\neq x$,}\\\max(\widehat{U}_x)&\text{if $z=x$.}\end{cases}
	\end{displaymath}
	It is easy to see that $r$ is continuous, that $ri=\id_{X-\{x\}}$ and that $ir\leq \id_X$. In particular, $X-\{x\}$ is a strong deformation retract of $X$.
	\label{rem_retract_beat_points}
\end{rem}

We will need the following well-known result, which is easy to prove. 
\begin{lemma}
	Let $X$ be a connected finite space, let $Y$ be a T$_1$--space and let $f\colon X\to Y$ be a continuous function. Then $f$ is a constant map.
	\label{lemma_cte}
\end{lemma}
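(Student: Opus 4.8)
The plan is to reduce the statement to the case of two comparable points and then to propagate the resulting equality across $X$ using connectedness. The key observation is that a continuous function always preserves the specialization preorder, whereas in a T$_1$--space this preorder is trivial.

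First I would record the elementary fact that $f(\overline{A})\subseteq\overline{f(A)}$ for any continuous map $f$ and any subset $A$. Applying this with $A=\{x\}$, and recalling the convention fixed in the Preliminaries that $x\leq y$ means precisely that $y$ lies in the closure of $\{x\}$, I obtain that whenever $x\leq y$ in $X$ we have $f(y)\in\overline{\{f(x)\}}$. Since $Y$ is a T$_1$--space, singletons are closed, so $\overline{\{f(x)\}}=\{f(x)\}$, and therefore $f(y)=f(x)$. Thus $f$ takes equal values on any pair of comparable points of $X$.

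Next I would invoke Stong's characterization quoted above, namely that a finite space is connected exactly when it is connected as a preordered set; concretely, for any $x,y\in X$ there is a fence $x=x_0,x_1,\dots,x_n=y$ in which each pair of consecutive elements $x_i,x_{i+1}$ is comparable. By the previous paragraph $f(x_i)=f(x_{i+1})$ for every $i$, whence $f(x)=f(y)$. As $x$ and $y$ were arbitrary, $f$ is constant.

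I do not expect a genuine obstacle here, since the argument is short; the only point requiring care is to match the direction of the specialization order with the closure convention of the Preliminaries, so that the inclusion $f(\overline{\{x\}})\subseteq\overline{\{f(x)\}}$ is applied in the correct direction. I would favour this closure formulation over the superficially similar remark that continuous maps are order preserving, precisely because it does not assume $Y$ to be finite, which is essential here as $Y$ is an arbitrary T$_1$--space.
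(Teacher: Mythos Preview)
Your argument is correct; the paper itself omits the proof of this lemma, merely recording it as a well-known and easy fact, so there is nothing to compare against. Your care in using the closure formulation rather than the order-preservation property is well placed, since $Y$ need not be finite.
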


\section{Bp--retracts of finite T$_0$--spaces}
\label{sec_bpr}

In this section, we introduce the concepts of \emph{dbp--retracts} and \emph{ubp--retracts} of finite T$_0$--spaces and prove some properties that will be needed in section \ref{sec-cofibrations}.  
\begin{definition}
	Let $X$ be a finite T$_0$--space and let $A\subseteq X$. We will say that $A$ is a \emph{dbp--retract} (resp.\ \emph{ubp--retract}) of $X$ if $A$ can be obtained from $X$ by successively removing down beat points (resp.\ up beat points), that is, if there exist $n\in\N_0$ and a sequence $X=X_0\supseteq X_1\supseteq\cdots\supseteq X_n=A$ of subspaces of $X$ such that, for all $i\in\{1,\ldots,n\}$, the space $X_i$ is obtained from $X_{i-1}$ by removing a single down beat point (resp.\ up beat point) of $X_{i-1}$.
	
	We will say that $A$ is a \emph{bp--retract} of $X$ if $A$ is either a dbp--retract or a ubp--retract of $X$.
\end{definition}

In particular, $X$ is a dbp--retract and a ubp--retract of itself.

\begin{rem}
	\label{rem_dbpr_of_dbpr_is_dbpr}
	If $Y$ is a finite T$_0$--space, $X$ is a dbp--retract of $Y$ and $A$ is a dbp--retract of $X$, then $A$ is a dbp--retract of $Y$.
\end{rem}

\begin{rem}
	\label{rem_mnl_X_in_A}
	Note that if $X$ is a finite T$_0$--space and $A$ is a dbp--retract of $X$ then the minimal points of $X$ are contained in $A$, since they cannot be down beat points of any subspace of $X$. In particular, $A$ must be dense in $X$. 
\end{rem}
\begin{rem}
	\label{rem_ubp_iff_dbp_op}
	If $X$ is a finite T$_0$--space, then a subspace $A$ of $X$ is a ubp--retract of $X$ if and only if $A^{\op}$ is a dbp--retract of $X^{\op}$.
\end{rem}
	In the rest of this section, we will prove several results for dbp--retracts of finite T$_0$--spaces. Similar results hold for ubp--retracts by \ref{rem_ubp_iff_dbp_op}.
	
\begin{theo} \label{theo_dbpr_equivalences}
	Let $X$ be a finite T$_0$--space, let $A$ be a subspace of $X$ and let $i\colon A\to X$ be the inclusion map. Then, the following propositions are equivalent:
	\begin{enumerate}[(1)]
		\item $A$ is a dbp--retract of $X$.
		\item There exists a continuous function $f\colon X\to X$ such that $f\leq\id_X$, $f^{2}=f$ and $f(X)=A$.
		\item There exists a unique continuous function $f\colon X\to X$ such that $f\leq\id_X$, $f^{2}=f$ and $f(X)=A$.
		\item There exists a retraction $r\colon X\to A$ of $i$ such that $ir\leq\id_X$.
		\item There exists a unique retraction $r\colon X\to A$ of $i$ such that $ir\leq\id_X$.
	\end{enumerate}
\end{theo}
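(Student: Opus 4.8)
The plan is to organize the five conditions around condition (2), exploiting that (2) and (4) (as well as (3) and (5)) are merely two bookkeepings of the same data. Indeed, given $f$ as in (2), corestricting it to its image yields a retraction $r\colon X\to A$ of $i$ with $ir=f\le\id_X$ (here $f|_A=\id_A$, because $f^2=f$ and $f(X)=A$ force $f(a)=a$ for every $a\in A$); conversely, from a retraction $r$ as in (4) the map $f=ir$ satisfies $f^2=i(ri)r=ir=f$, $f\le\id_X$ and $f(X)=A$. This correspondence is a bijection between the two solution sets, so it transports uniqueness statements, giving (2)$\Leftrightarrow$(4) and (3)$\Leftrightarrow$(5); and since (3)$\Rightarrow$(2) and (5)$\Rightarrow$(4) are trivial, it remains to prove (1)$\Rightarrow$(2), (2)$\Rightarrow$(1) and the uniqueness clause (2)$\Rightarrow$(3).

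For (1)$\Rightarrow$(2) I would argue by induction along a defining sequence $X=X_0\supseteq\cdots\supseteq X_n=A$, using Remark \ref{rem_retract_beat_points}: removing a down beat point $x_k$ of $X_{k-1}$ furnishes a retraction $r_k\colon X_{k-1}\to X_k$ with $r_k\le\id_{X_{k-1}}$ (reading $r_k$ as a self-map). The only thing to check is that such retractions compose well: if $A\subseteq B\subseteq X$, $r\colon X\to B$ is a retraction with $r\le\id_X$ and $s\colon B\to A$ is a retraction with $s\le\id_B$, then $sr\colon X\to A$ is again a retraction, and $sr(x)\le r(x)\le x$ for all $x$, so $sr\le\id_X$. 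Iterating, the composite $r_n\cdots r_1\colon X\to A$ is a retraction lying below $\id_X$, which is exactly (2) (equivalently (4)).

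The substantive direction is (2)$\Rightarrow$(1), which I would prove by induction on $|X-A|$. If $X=A$ there is nothing to do. Otherwise the key claim is that any element $x$ that is \emph{minimal} in the subposet $X-A$ is a down beat point of $X$. To see this, take $z<x$: minimality forbids $z\in X-A$, so $z\in A$ and hence $z=f(z)\le f(x)$; since also $f(x)<x$ (as $x\notin A$ while $f(x)\in A$), the element $f(x)$ is the maximum of $\widehat{U}_x$. Removing $x$, the restriction $f|_{X-\{x\}}$ still satisfies the hypotheses of (2) with image $A$ (because $x\notin A=f(X)$, so no value is lost), and $|(X-\{x\})-A|<|X-A|$; by inductive hypothesis $A$ is a dbp--retract of $X-\{x\}$, and prepending the removal of $x$ shows $A$ is a dbp--retract of $X$. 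I expect the main obstacle here to be choosing the right point to remove: a maximal element of $X-A$ need \emph{not} be a down beat point (there can be a non-$A$ element beneath it producing a second maximal element of $\widehat{U}_x$), so it is essential to peel off minimal elements of $X-A$ instead.

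Finally, for uniqueness (2)$\Rightarrow$(3) I would show that any $f$ as in (2) is forced: for $x\in X$ the value $f(x)$ lies in $A$ with $f(x)\le x$, and for every $a\in A$ with $a\le x$ one has $a=f(a)\le f(x)$; hence $f(x)=\max\{a\in A: a\le x\}$, an expression independent of $f$. This pins down $f$ uniquely and closes the cycle of implications.
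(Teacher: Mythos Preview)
Your proof is correct and follows essentially the same approach as the paper: the crucial step $(2)\Rightarrow(1)$ via minimal elements of $X-A$ is identical (the paper phrases it as minimal elements of $W=\{x:f(x)<x\}$, which is the same set), and your uniqueness argument via the formula $f(x)=\max\{a\in A:a\le x\}$ is a more explicit version of the paper's symmetric comparison $r_2(x)=r_1r_2(x)\le r_1(x)$. The only difference is organizational---you establish the bijection $(2)\Leftrightarrow(4)$, $(3)\Leftrightarrow(5)$ upfront, whereas the paper threads through the chain $(1)\Rightarrow(4)\Rightarrow(5)\Rightarrow(3)\Rightarrow(2)\Rightarrow(1)$---but the mathematical content is the same.
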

\begin{proof} We will prove that $(1)\Rightarrow (4)\Rightarrow (5)\Rightarrow (3)\Rightarrow (2)\Rightarrow (1)$.

	The implication $(1)\Rightarrow (4)$ follows from \ref{rem_retract_beat_points}.
	
	Next, we will show that $(4)\Rightarrow (5)$.
	For $k=1,2$, let $r_k\colon X\to A$ be a retraction of $i$ such that $ir_k\leq\id_X$. We wish to show that that $r_1=r_2$. Let $x\in X$. 
	Since $r_2(x)\in A$ and $r_2(x)\leq x$ it follows that $r_2(x)=r_1r_2(x)\leq r_1(x)$. Similarly, $r_1(x)\leq r_2(x)$ and thus, $r_1(x)=r_2(x)$. Therefore, $r_1=r_2$.
	
	We will now prove that $(5)\Rightarrow (3)$. Suppose that there exists a unique retraction $r\colon X\to A$ of $i$ such that $ir\leq\id_X$. It is clear that $(ir)^{2}=ir$ and that $ir(X)=A$. Thus, we have proved that $(5)$ implies $(2)$.
	
	Now, for $k=1,2$, let $f_k\colon X\to X$ be a continuous function such that $f_k\leq \id_X$, $f_k^{2}=f_k$ and $f_k(X)=A$ and let $r_k\colon X\to A$ be the range restriction of $f_k$. Since $f_k^{2}=f_k$ and $f_k\leq\id_X$, it is clear that $r_k$ is a retraction of $i$ such that $ir_k\leq\id_X$ for $k=1,2$. Hence, $r_1=r_2$. It follows that $f_1=f_2$.
	
	The implication $(3)\Rightarrow (2)$ is clear. 
	
	Next, we will show that $(2)\Rightarrow (1)$. Let $f\colon X\to X$ be a continuous function such that $f\leq\id_X$, $f^{2}=f$ and $f(X)=A$.	
	
	Let $W=\{x\in X:f(x)<x\}$. If $W=\varnothing$ then $X=f(X)=A$ and the result follows. 
	
	Suppose that $W\neq \varnothing$.
	Let $x_0$ be a minimal element of $W$. Since $x_0\in W$, $f(x_0)<x_0$. We claim that $f(x_0)=\max \widehat U_{x_0}$. Indeed, if $x_1<x_0$ then $x_1\not\in W$ and therefore $x_1=f(x_1)\leq f(x_0)$. It follows that $x_0$ is a down beat point of $X$.
	
	Let $X'=X-\{x_0\}$. Thus, $X'$ is obtained from $X$ by removing a single down beat point. Since $f\circ f=f$ it follows that $x_0\not\in f(X)$, and therefore, we can restrict $f$ to a function $f'\colon X'\to X'$. It is clear that $f'\leq \id_{X'}$ and that $f'\circ f'=f'$. Moreover, $\im f'=\im f=A$ since $f(x_0)=f(f(x_0))\in f(X')$. The result follows by an inductive argument.
\end{proof}
\begin{rem}
	\label{rem_f_r}
	Let $X$ be a finite T$_0$--space, let $A$ be a dbp--retract of $X$ and let $i\colon A\to X$ be the inclusion map. From the proof of \ref{theo_dbpr_equivalences}, it is clear that the unique continuous function $f\colon X\to X$ such that $f\leq\id_X$, $f^{2}=f$ and $f(X)=A$ and the unique retraction $r\colon X\to A$ of $i$ such that $ir\leq\id_X$ are related by $f=ir$. Equivalently, $r$ is the range restriction of $f$ to its image $A$.
\end{rem}
\begin{theo}
	\label{theo_Ux_cap_A}
	Let $X$ be a finite T$_0$--space and let $A$ be a subspace of $X$. Then, $A$ is a dbp--retract of $X$ if and only if $U^{X}_x\cap A$ has a maximum for every $x\in X$. Equivalently, $A$ is a dbp--retract of $X$ if and only if $U^{X}_x\cap A$ has a maximum for every $x\in X-A$.
\end{theo}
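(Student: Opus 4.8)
The plan is to deduce the first equivalence from the characterization already established in Theorem~\ref{theo_dbpr_equivalences}, using the equivalence between $A$ being a dbp--retract and condition~$(2)$ there: the existence of a continuous $f\colon X\to X$ with $f\leq\id_X$, $f^{2}=f$ and $f(X)=A$. The conceptual bridge between the two statements will be the explicit pointwise formula $f(x)=\max(U^{X}_x\cap A)$, and almost the entire argument consists of checking that this formula describes precisely the function supplied by condition~$(2)$.

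For the implication from dbp--retract to the maximum condition, I would start from a function $f$ as in~$(2)$ and show that $f(x)=\max(U^{X}_x\cap A)$ for every $x\in X$. Since $f\leq\id_X$ gives $f(x)\leq x$, i.e.\ $f(x)\in U^{X}_x$, and $f(X)=A$ gives $f(x)\in A$, we obtain $f(x)\in U^{X}_x\cap A$, so this set is in particular nonempty. I would then note that $f$ fixes $A$ pointwise: if $a\in A=f(X)$ then $a=f(y)$ for some $y\in X$, whence $f(a)=f^{2}(y)=f(y)=a$. Finally, for an arbitrary $a\in U^{X}_x\cap A$ one has $a\leq x$, so order preservation of $f$ yields $a=f(a)\leq f(x)$; thus $f(x)$ is an upper bound of $U^{X}_x\cap A$ that belongs to it, and hence is its maximum.

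For the converse, assuming $U^{X}_x\cap A$ has a maximum for every $x\in X$, I would define $f\colon X\to X$ by $f(x)=\max(U^{X}_x\cap A)$ and verify that it satisfies condition~$(2)$, so that Theorem~\ref{theo_dbpr_equivalences} finishes the proof. The inequality $f\leq\id_X$ is immediate from $f(x)\in U^{X}_x$; the equality $f(X)=A$ follows because $f(a)=\max(U^{X}_a\cap A)=a$ for every $a\in A$ (as $a=\max U^{X}_a$) while $f(x)\in A$ always; and $f^{2}=f$ follows at once, since $f(x)\in A$ is fixed by $f$. The one step that requires the poset structure, and which I regard as the main (if modest) obstacle, is continuity of $f$: for $x\leq y$ we have $U^{X}_x\subseteq U^{X}_y$, hence $U^{X}_x\cap A\subseteq U^{X}_y\cap A$, so $f(x)=\max(U^{X}_x\cap A)$ lies in $U^{X}_y\cap A$ and therefore $f(x)\leq f(y)$.

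It remains to reconcile the condition over all of $X$ with the condition over $X-A$. One inclusion is trivial. For the other, I would simply observe that for $x\in A$ we always have $x\in U^{X}_x\cap A$ and $x=\max U^{X}_x$, so $\max(U^{X}_x\cap A)=x$ exists automatically; therefore requiring the maximum to exist for every $x\in X-A$ is equivalent to requiring it for every $x\in X$, and the two formulations coincide.
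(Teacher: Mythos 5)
Your proof is correct and follows essentially the same route as the paper's: both directions hinge on the identity $\max(U^{X}_x\cap A)=$ (value at $x$ of the canonical idempotent retraction), combined with Theorem~\ref{theo_dbpr_equivalences}. The only cosmetic difference is that you work with the idempotent $f\colon X\to X$ of condition~(2), whereas the paper works with the retraction $r\colon X\to A$ of condition~(4); by Remark~\ref{rem_f_r} these are the same map up to range restriction, so the arguments coincide step for step.
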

\begin{proof}
	Suppose that $A$ is a dbp--retract of $X$. Let $r\colon X\to A$ be the only retraction of $i$ such that $ir\leq \id_X$ and let $x\in X$. It is clear that $r(x)\in U^{X}_x\cap A$. Now, if $y\in U^{X}_x\cap A$ then $y=r(y)\leq r(x)$ since $y\leq x$. Therefore, $r(x)$ is the maximum of $U^{X}_x\cap A$.
	
	Now, suppose that $U^{X}_x\cap A$ has a maximum for every $x\in X-A$.
	Note that $a$ is the maximum of $U^{X}_a\cap A$ for every $a\in A$. 
	Let $r\colon X\to A$ be defined by $r(x)=\max (U^{X}_x \cap A)$. If $x\leq x'$ then $U^{X}_x\subseteq U^{X}_{x'}$ and hence $r(x)\leq r(x')$. It follows that $r$ is continuous. It is clear that $ri=\id_A$ and that $ir\leq\id_X$. Hence, by \ref{theo_dbpr_equivalences} $A$ is a dbp--retract of $X$.
\end{proof}
\begin{prop}
	Let $Y$ be a finite T$_0$--space, let $X\subseteq Y$ and let $A\subseteq X$ be a dbp--retract of $Y$. Then $A$ is a dbp--retract of $X$.
	\label{prop_dbpr_of_Y_and_sub_of_X_is_dbpr_of_X}
\end{prop}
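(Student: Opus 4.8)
The plan is to reduce everything to the local criterion of Theorem \ref{theo_Ux_cap_A}, which characterizes dbp--retracts by the condition that the intersection of each minimal open set with the subspace has a maximum. So I would first translate both the hypothesis and the goal into that language. The hypothesis that $A$ is a dbp--retract of $Y$ becomes: $U^{Y}_y\cap A$ has a maximum for every $y\in Y$. The goal that $A$ is a dbp--retract of $X$ becomes: $U^{X}_x\cap A$ has a maximum for every $x\in X$.

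The key step is to compare the minimal open sets computed in $X$ with those computed in $Y$. Since $X$ is a subspace of $Y$, the specialization order on $X$ is the restriction of that on $Y$, so for every $x\in X$ we have $U^{X}_x=U^{Y}_x\cap X$. Intersecting with $A$ and using $A\subseteq X$, I would obtain
\[
	U^{X}_x\cap A = U^{Y}_x\cap X\cap A = U^{Y}_x\cap A .
\]

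The conclusion is then immediate: given $x\in X\subseteq Y$, the hypothesis guarantees that $U^{Y}_x\cap A$ has a maximum, and by the displayed identity this set equals $U^{X}_x\cap A$. Hence $U^{X}_x\cap A$ has a maximum for every $x\in X$, and Theorem \ref{theo_Ux_cap_A} yields that $A$ is a dbp--retract of $X$.

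I do not expect any serious obstacle here; the only point requiring a moment's care is the identity $U^{X}_x=U^{Y}_x\cap X$, which is exactly the statement that the minimal open set of a point of the subspace $X$ is inherited from $Y$. Once this is recorded, the argument is a one-line application of the characterization theorem. (A more hands-on alternative would be to take the unique retraction $r\colon Y\to A$ with $ir\leq\id_Y$ furnished by Theorem \ref{theo_dbpr_equivalences}, restrict it to $X$, and verify directly that the restriction is a retraction $X\to A$ with the corresponding inequality below the identity of $X$; but this essentially re-proves the relevant direction of Theorem \ref{theo_Ux_cap_A} and is less clean.)
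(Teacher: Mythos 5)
Your proof is correct, but it takes a different route from the paper's. You reduce everything to the local criterion of Theorem \ref{theo_Ux_cap_A}: using the identity $U^{X}_x=U^{Y}_x\cap X$ (valid because the specialization order on a subspace is the restriction of the ambient order) you get $U^{X}_x\cap A=U^{Y}_x\cap A$ for $x\in X$, and the conclusion follows by applying the criterion in both directions. The paper instead works directly with the retraction characterization of Theorem \ref{theo_dbpr_equivalences}: writing $i\colon A\to X$ and $j\colon X\to Y$ for the inclusions and taking $r\colon Y\to A$ with $rji=\id_A$ and $jir\leq\id_Y$, it simply observes that $rj\colon X\to A$ satisfies $rji=\id_A$ and $irj\leq\id_X$ — which is precisely the ``hands-on alternative'' you mention and set aside at the end. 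Your characterization of that alternative is a bit off: it does not re-prove any direction of Theorem \ref{theo_Ux_cap_A}; it is two lines of composition algebra (its only hidden ingredient being the same fact you rely on, that the order on $X$ is restricted from $Y$). So the paper's proof is, if anything, the more economical one, since Theorem \ref{theo_Ux_cap_A} is itself deduced from Theorem \ref{theo_dbpr_equivalences}; what your route buys is that it makes the local, pointwise nature of the dbp--retract property explicit, and it shows the proposition is really a statement about maxima of down-sets being inherited by intermediate subspaces.
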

\begin{proof}
	Let $i\colon A\to X$ and $j\colon X\to Y$ be the inclusion maps and let $r\colon Y\to A$ be a continuous function such that $rji=\id_A$ and $jir\leq \id_Y$. Then $rj\colon X\to A$ satisfies $rji=\id_A$ and $irj\leq \id_X$, and thus, $A$ is a dbp--retract of $X$ by \ref{theo_dbpr_equivalences}.
\end{proof}
\begin{coro}
	Let $Y$ be a finite T$_0$--space, let $X$ be a dbp--retract of $Y$ and let $A\subseteq X$. Then $A$ is a dbp--retract of $X$ if and only if $A$ is a dbp--retract of $Y$.
	\label{coro_dbpr}
\end{coro}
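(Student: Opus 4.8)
The plan is to recognize that this corollary follows immediately from the two results established just before it, with one result supplying each direction of the equivalence. Concretely, the transitivity statement in Remark \ref{rem_dbpr_of_dbpr_is_dbpr} will give the forward implication, and Proposition \ref{prop_dbpr_of_Y_and_sub_of_X_is_dbpr_of_X} will give the converse.

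For the forward implication, I would suppose that $A$ is a dbp--retract of $X$. Since $X$ is a dbp--retract of $Y$ by hypothesis, the situation matches exactly the setup of Remark \ref{rem_dbpr_of_dbpr_is_dbpr}: $X$ is a dbp--retract of $Y$ and $A$ is a dbp--retract of $X$. Applying that remark yields directly that $A$ is a dbp--retract of $Y$.

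For the converse, I would suppose that $A$ is a dbp--retract of $Y$. The inclusions $A \subseteq X \subseteq Y$ together with this hypothesis are precisely the assumptions of Proposition \ref{prop_dbpr_of_Y_and_sub_of_X_is_dbpr_of_X}, whose conclusion is that $A$ is a dbp--retract of $X$. This completes the equivalence.

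I do not expect any genuine obstacle here, as the two cited results have already carried out all the substantive work; the entire task reduces to checking that their hypotheses align with the present setting, which they do verbatim. The point worth noting is simply that the dbp--retract relation is both transitive along a descending chain and inherited by intermediate subspaces, and these two facts are exactly what Remark \ref{rem_dbpr_of_dbpr_is_dbpr} and Proposition \ref{prop_dbpr_of_Y_and_sub_of_X_is_dbpr_of_X} encode.
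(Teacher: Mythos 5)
Your proposal is correct and matches the paper's own proof, which likewise derives the corollary immediately from Remark \ref{rem_dbpr_of_dbpr_is_dbpr} (forward direction) and Proposition \ref{prop_dbpr_of_Y_and_sub_of_X_is_dbpr_of_X} (converse). You have simply spelled out the same two applications that the paper cites.
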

\begin{proof}
	Immediate from 	\ref{rem_dbpr_of_dbpr_is_dbpr} and \ref{prop_dbpr_of_Y_and_sub_of_X_is_dbpr_of_X}.
\end{proof}
\begin{prop}
	Let $X$ be a finite T$_0$--space and let $A_1$ and $A_2$ be two dbp--retracts of $X$. For $k=1,2$, let $i_k\colon A_k\to X$ be the inclusion and let $r_k\colon X\to A_k$ be the unique retraction of $i_k$ such that $i_k r_k\leq \id_X$. Then $A_1\subseteq A_2$ if and only if $i_1 r_1\leq i_2 r_2$.
	\label{prop_A_subseteq_iff_f_leq}
\end{prop}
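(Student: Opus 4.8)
The plan is to reduce everything to the explicit description of the retractions provided by the proof of \ref{theo_Ux_cap_A}. Recall that, since $A_k$ is a dbp--retract of $X$, the set $U^X_x\cap A_k$ has a maximum for every $x\in X$, and the unique retraction $r_k$ of $i_k$ with $i_k r_k\leq\id_X$ is given by $r_k(x)=\max(U^X_x\cap A_k)$. Regarding $i_k r_k$ as a self-map of $X$, we thus have $i_k r_k(x)=\max(U^X_x\cap A_k)$ for all $x\in X$, and this is the formula I would work with throughout both implications.

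For the forward implication, assume $A_1\subseteq A_2$. Then $U^X_x\cap A_1\subseteq U^X_x\cap A_2$ for every $x\in X$. Since $\max(U^X_x\cap A_1)$ belongs to $U^X_x\cap A_1$ and hence to $U^X_x\cap A_2$, it is bounded above by $\max(U^X_x\cap A_2)$; that is, $i_1 r_1(x)\leq i_2 r_2(x)$ for all $x$, which is precisely $i_1 r_1\leq i_2 r_2$.

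For the converse, assume $i_1 r_1\leq i_2 r_2$ and let $a\in A_1$. Since $r_1$ is a retraction of $i_1$ we have $i_1 r_1(a)=a$, so $a=i_1 r_1(a)\leq i_2 r_2(a)$. On the other hand, $i_2 r_2\leq\id_X$ gives $i_2 r_2(a)\leq a$, whence $a=i_2 r_2(a)$. As $i_2 r_2(a)\in A_2$, we conclude that $a\in A_2$, and therefore $A_1\subseteq A_2$.

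There is no serious obstacle here once the maximum--formula from \ref{theo_Ux_cap_A} is in hand; the only point requiring a little care is the converse, where one must combine the hypothesis $i_1 r_1\leq i_2 r_2$ with the inequality $i_2 r_2\leq\id_X$ to upgrade $a\leq i_2 r_2(a)$ into an equality, thereby placing the element $a$ inside $A_2$.
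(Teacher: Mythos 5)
Your proof is correct, but the forward implication follows a genuinely different route from the paper's. The paper handles $A_1\subseteq A_2$ structurally: it first invokes \ref{prop_dbpr_of_Y_and_sub_of_X_is_dbpr_of_X} to see that $A_1$ is a dbp--retract of $A_2$, obtains a retraction $r\colon A_2\to A_1$ with $ir\leq\id_{A_2}$, shows by the uniqueness clause of \ref{theo_dbpr_equivalences} that $rr_2=r_1$, and then concludes $i_1r_1=i_2irr_2\leq i_2r_2$; as a by-product this exhibits the factorization $r_1=rr_2$ of the retraction through the larger dbp--retract. You instead work pointwise with the explicit formula $r_k(x)=\max\bigl(U^X_x\cap A_k\bigr)$, which makes the forward direction a one-line monotonicity observation about maxima of nested sets. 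One small bookkeeping point: that formula is established inside the \emph{proof} of \ref{theo_Ux_cap_A} rather than in its statement, so strictly you should either cite that argument explicitly or re-derive it (which takes two lines: $r_k(x)\in U^X_x\cap A_k$ since $r_k(x)\leq x$ and $r_k(x)\in A_k$; and any $y\in U^X_x\cap A_k$ satisfies $y=r_k(y)\leq r_k(x)$). Your converse is identical to the paper's. On balance, your argument is more elementary and self-contained once the maximum description is in hand, while the paper's argument avoids any pointwise computation and records the extra structural fact that the smaller retraction factors through the larger one.
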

\begin{proof}
	Suppose that $A_1\subseteq A_2$ and let $i\colon A_1\to A_2$ be the inclusion. By \ref{prop_dbpr_of_Y_and_sub_of_X_is_dbpr_of_X}, $A_1$ is a dbp--retract of $A_2$. Hence, there exists a retraction $r$ of $i$ such that $ir\leq \id_{A_2}$. Now, since $i_2 i=i_1$, it follows that $rr_2i_1=rr_2i_2i=\id_{A_1}$ and $i_1rr_2=i_2irr_2\leq i_2r_2\leq \id_X$. By \ref{theo_dbpr_equivalences}, $rr_2=r_1$. Thus, $i_1r_1=i_2irr_2\leq i_2r_2$.
	
	Now, suppose that $i_1r_1\leq i_2r_2$ and let $w\in A_1$. Then $w=i_1r_1(w)\leq i_2r_2(w)\leq w$ and it follows that $i_2r_2(w)=w$. Hence, $w\in A_2$. The result follows.
\end{proof}
\begin{coro}
	\label{coro_subseteq_iff_f1_leq_f2}
	Let $X$ be a finite T$_0$--space and, for $k=1,2$, let $f_k\colon X\to X$ be a continuous function such that $f_k\circ f_k=f_k$ and $f_k\leq \id_X$. Then $f_1\leq f_2$ if and only if $f_1(X)\subseteq f_2(X)$.
\end{coro}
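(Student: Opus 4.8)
The plan is to reduce this directly to Proposition \ref{prop_A_subseteq_iff_f_leq} by recognizing each $f_k$ as the canonical idempotent associated with a dbp--retract. First I would set $A_k = f_k(X)$ for $k = 1, 2$. Since each $f_k$ is continuous, satisfies $f_k \leq \id_X$ and $f_k^{2} = f_k$, and has image $A_k$, the implication $(2)\Rightarrow(1)$ of Theorem \ref{theo_dbpr_equivalences} shows that $A_1$ and $A_2$ are dbp--retracts of $X$.

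Next I would invoke Remark \ref{rem_f_r} to identify each $f_k$ explicitly. Letting $i_k \colon A_k \to X$ be the inclusion and $r_k \colon X \to A_k$ the unique retraction of $i_k$ with $i_k r_k \leq \id_X$, that remark gives $f_k = i_k r_k$. Hence the inequality $f_1 \leq f_2$ is literally the inequality $i_1 r_1 \leq i_2 r_2$, and the inclusion $f_1(X) \subseteq f_2(X)$ is literally $A_1 \subseteq A_2$.

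With these identifications in place, the corollary is exactly the statement of Proposition \ref{prop_A_subseteq_iff_f_leq}: it asserts that $A_1 \subseteq A_2$ if and only if $i_1 r_1 \leq i_2 r_2$, which translates verbatim into $f_1(X) \subseteq f_2(X)$ if and only if $f_1 \leq f_2$. Since the hypotheses of the proposition --- that $A_1$ and $A_2$ are dbp--retracts with associated retractions $r_k$ --- have all been verified, applying it closes the argument.

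There is essentially no genuine obstacle here; the only point requiring care is confirming that the maps $f_k$ given in the hypotheses coincide with the maps $i_k r_k$ furnished by the uniqueness in Theorem \ref{theo_dbpr_equivalences}, so that Proposition \ref{prop_A_subseteq_iff_f_leq} may be applied without ambiguity. This is precisely what Remark \ref{rem_f_r} records, so once that observation is made the result is immediate.
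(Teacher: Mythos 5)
Your proof is correct and follows exactly the same route as the paper's own argument: apply Theorem \ref{theo_dbpr_equivalences} to see that $f_1(X)$ and $f_2(X)$ are dbp--retracts, use Remark \ref{rem_f_r} to identify $f_k$ with $i_k r_k$, and then quote Proposition \ref{prop_A_subseteq_iff_f_leq}. The only difference is that you spell out the identifications that the paper leaves implicit, which is fine.
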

\begin{proof}
	By \ref{theo_dbpr_equivalences}, $f_1(X)$ and $f_2(X)$ are dbp--retracts of $X$. The result follows from \ref{rem_f_r} and \ref{prop_A_subseteq_iff_f_leq}.
\end{proof}

\begin{definition}
	Let $X$ be a finite T$_0$--space and let $A$ be a subspace of $X$. We define
		\begin{displaymath}
		\F(X,A)=\{f\in X^{X}:f\leq\id_X,\ f^{2}=f,\text{ and }A\subseteq f(X)\}
	\end{displaymath}
	and
	\begin{displaymath}
		\Omega(X,A)=\{W\subseteq X:\text{$W$ is a dbp--retract of $X$ and $A\subseteq W$}\}.
	\end{displaymath}
	
	The set $\F(X,A)$ will be considered as a subposet of $X^{X}$ and the set $\Omega(X,A)$ will be considered as a poset with the order given by set inclusion. 
\end{definition}
\begin{rem}
	Let $X$ be a finite T$_0$--space and let $X_m$ denote the set of minimal elements of $X$. By \ref{rem_mnl_X_in_A}, it is clear that $\Omega(X,\varnothing)=\Omega(X,A)$ for every $A\subseteq X_m$.
\end{rem}

\begin{prop}
	Let $X$ be a finite T$_0$--space and let $A$ be a subspace of $X$. Then $\F(X,A)$ is order isomorphic to $\Omega(X,A)$.
	\label{prop_Omega_iso_F}
\end{prop}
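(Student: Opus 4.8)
The plan is to exhibit the map $\Phi\colon\F(X,A)\to\Omega(X,A)$ given by $\Phi(f)=f(X)$ as the desired order isomorphism, and to construct its inverse directly from \ref{theo_dbpr_equivalences}. The whole statement should then fall out by assembling the earlier results, with no essentially new argument required.

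First I would check that $\Phi$ is well defined. If $f\in\F(X,A)$ then $f\leq\id_X$ and $f^{2}=f$, so by the implication $(2)\Rightarrow(1)$ of \ref{theo_dbpr_equivalences} the subspace $f(X)$ is a dbp--retract of $X$; since moreover $A\subseteq f(X)$ by the very definition of $\F(X,A)$, we conclude that $f(X)\in\Omega(X,A)$.

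Next I would produce the inverse. Given $W\in\Omega(X,A)$, the subspace $W$ is a dbp--retract of $X$, so by the equivalence $(1)\Leftrightarrow(3)$ of \ref{theo_dbpr_equivalences} there is a \emph{unique} continuous function $f_W\colon X\to X$ with $f_W\leq\id_X$, $f_W^{2}=f_W$ and $f_W(X)=W$. As $A\subseteq W=f_W(X)$, we have $f_W\in\F(X,A)$, so the assignment $\Psi\colon W\mapsto f_W$ lands in $\F(X,A)$. By construction $\Phi\Psi=\id_{\Omega(X,A)}$, and the uniqueness clause of $(3)$ forces $\Psi\Phi=\id_{\F(X,A)}$; thus $\Phi$ is a bijection with inverse $\Psi$.

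Finally I would verify that $\Phi$ and $\Psi$ are order preserving, which is immediate from \ref{coro_subseteq_iff_f1_leq_f2}: for $f_1,f_2\in\F(X,A)$ one has $f_1\leq f_2$ if and only if $f_1(X)\subseteq f_2(X)$, that is, if and only if $\Phi(f_1)\subseteq\Phi(f_2)$. This single equivalence shows at once that $\Phi$ preserves order and that its inverse $\Psi$ does. Since all the real work has already been carried out in \ref{theo_dbpr_equivalences} and \ref{coro_subseteq_iff_f1_leq_f2}, there is no genuine obstacle here; the only point needing care is to invoke the uniqueness part of condition $(3)$, so as to guarantee that $\Psi$ is a two-sided inverse of $\Phi$ rather than merely a right inverse.
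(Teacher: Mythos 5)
Your proof is correct and follows essentially the same route as the paper: both exhibit $f\mapsto f(X)$ as the bijection $\F(X,A)\to\Omega(X,A)$ via Theorem \ref{theo_dbpr_equivalences} and then invoke Corollary \ref{coro_subseteq_iff_f1_leq_f2} to see that it and its inverse preserve order. You merely spell out the well-definedness and the construction of the inverse (via the uniqueness clause of condition $(3)$) in more detail than the paper does.
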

\begin{proof}
	By \ref{theo_dbpr_equivalences}, there is a bijection $\varphi\colon \F(X,A)\to \Omega(X,A)$ defined by $\varphi(f)=f(X)$ for every $f\in \F(X,A)$. By \ref{coro_subseteq_iff_f1_leq_f2}, $\varphi$ and its inverse are order-preserving functions. The result follows.
\end{proof}

	Let $X$ be a finite T$_0$--space and let $f\colon X\to X$ be a continuous function such that $f\leq \id_X$. Since $f\geq f^{2}\geq f^{3}\geq\ldots$ and $X^{X}$ is finite, there exists $N\in\N$ such that $f^{N+1}=f^{N}$. It is clear that $f^{n}=f^{N}$ for every $n\geq N$. This motivates the following definition.
\begin{definition}
	Let $X$ be a finite T$_0$--space and let $f\colon X\to X$ be a continuous function such that $f\leq\id_X$. We define $f^{\infty}$ by $f^{N}$ where $N\in\N$ is such that $f^{N}=f^{N+1}$. 
\end{definition}

The following lemma states some simple properties of the construction of the previous definition.

\begin{lemma} \label{lemma_f_infty}
Let $X$ be a finite T$_0$--space and let $f,g\colon X\to X$ be continuous functions such that $f\leq \id_X$ and $g\leq \id_X$. Then:
\begin{enumerate}
\item $f^{\infty}\leq f\leq \id_X$.
\item $f^{\infty}\circ f^{\infty}=f^{\infty}$.
\item $f^{\infty}(X)$ is a dbp--retract of $X$.
\item For all $x\in X$, $x\in f^{\infty}(X)$ if and only if $f(x)=x$.
\item If $f\leq g$ then $f^{\infty}\leq g^{\infty}$.
\item $(fg)^{\infty}(X)=f^{\infty}(X)\cap g^{\infty}(X)$.
\end{enumerate}
\end{lemma}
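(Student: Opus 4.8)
The plan is to prove each of the six parts of Lemma~\ref{lemma_f_infty} by leveraging the machinery already established, particularly the characterizations in \ref{theo_dbpr_equivalences} and \ref{coro_subseteq_iff_f1_leq_f2}. The key observation underpinning everything is that $f^{\infty}$, by construction, is the stabilized iterate $f^{N}$ with $f^{N}=f^{N+1}$, so it automatically lies in $\F(X,\varnothing)$.

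First I would dispose of parts (1)--(4) quickly, since they essentially restate that $f^{\infty}\in\F(X,\varnothing)$. For (1), the chain $f^{\infty}=f^{N}\leq f^{N-1}\leq\cdots\leq f\leq\id_X$ (valid since composing $f\leq\id_X$ with an order-preserving map preserves inequalities) gives the result. For (2), from $f^{N}=f^{N+1}$ one gets $f^{\infty}\circ f^{\infty}=f^{2N}=f^{N}=f^{\infty}$ by stability. Part (3) is then immediate: since $f^{\infty}\leq\id_X$ and $(f^{\infty})^{2}=f^{\infty}$, the set $f^{\infty}(X)$ is a dbp--retract by the implication $(2)\Rightarrow(1)$ of \ref{theo_dbpr_equivalences}. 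For (4), if $f(x)=x$ then $f^{\infty}(x)=x$, so $x\in f^{\infty}(X)$; conversely, if $x\in f^{\infty}(X)$, then $x=f^{\infty}(x)=f^{\infty}(f(x))$ is fixed, and combining $f(x)\leq x$ with applying the idempotent $f^{\infty}$ forces $f(x)=x$. For (5), if $f\leq g$ then $f^{n}\leq g^{n}$ for every $n$ (again by functoriality of composition with respect to $\leq$), and taking $n$ large enough that both have stabilized yields $f^{\infty}\leq g^{\infty}$.

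The main work is part (6), the statement that $(fg)^{\infty}(X)=f^{\infty}(X)\cap g^{\infty}(X)$. The clean route is to translate the set equation into a statement about the associated idempotents and invoke \ref{coro_subseteq_iff_f1_leq_f2}, which says that for idempotents below $\id_X$, image-inclusion is equivalent to the pointwise order. For the inclusion $(fg)^{\infty}(X)\subseteq f^{\infty}(X)\cap g^{\infty}(X)$, I would argue elementwise via part (4): if $x\in(fg)^{\infty}(X)$ then $fg(x)=x$, and since $f,g\leq\id_X$ we have $x=fg(x)\leq g(x)\leq x$, forcing $g(x)=x$, and then $x=f(g(x))=f(x)$, so $x$ is fixed by both $f$ and $g$, placing it in both images. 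The reverse inclusion is the delicate direction: given $x\in f^{\infty}(X)\cap g^{\infty}(X)$, part (4) gives $f(x)=x$ and $g(x)=x$, hence $fg(x)=x$, so $x$ is fixed by $fg$; I would then check that $fg\leq\id_X$ so that $(fg)^{\infty}$ is defined and that a fixed point of $fg$ is a fixed point of $(fg)^{\infty}$, giving $x\in(fg)^{\infty}(X)$.

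The step I expect to be the most delicate is verifying in (6) that $fg\leq\id_X$ so that $(fg)^{\infty}$ makes sense, and confirming that the fixed-point characterization of part (4) transfers correctly to $fg$ rather than to a single map --- that is, I must be careful that $(fg)(x)=x$ genuinely implies $x\in(fg)^{\infty}(X)$ and vice versa. Once this fixed-point bookkeeping is handled, both inclusions follow from part (4) applied to the three maps $f$, $g$, and $fg$, and the two elementwise arguments above close the proof. An alternative, more structural approach would avoid elementwise reasoning by showing $(fg)^{\infty}$ is the meet of $f^{\infty}$ and $g^{\infty}$ in $\F(X,\varnothing)$ and appealing to \ref{prop_Omega_iso_F} together with the closure of $\Omega(X,\varnothing)$ under intersections; but the direct fixed-point argument via part (4) is shorter and self-contained.
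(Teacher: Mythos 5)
Your proof is correct. For items (1)--(5) you and the paper do essentially the same thing: the paper declares these items easy and omits the verifications, and the arguments you supply (the descending chain $f^{N}\leq\cdots\leq f\leq\id_X$, stabilization for idempotence, the implication $(2)\Rightarrow(1)$ of \ref{theo_dbpr_equivalences} for item (3), and induction for item (5)) are exactly the intended ones. The only genuine divergence is in item (6), and only in one direction. For the inclusion $(fg)^{\infty}(X)\subseteq f^{\infty}(X)\cap g^{\infty}(X)$ the paper argues order-theoretically: from $fg\leq f$ and item (5) it deduces $(fg)^{\infty}\leq f^{\infty}$, and then invokes Corollary \ref{coro_subseteq_iff_f1_leq_f2} to convert the inequality between idempotents into an inclusion of their images. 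You instead argue elementwise: $x\in(fg)^{\infty}(X)$ gives $fg(x)=x$ by item (4) applied to the map $fg$, and then $x=fg(x)\leq g(x)\leq x$ forces $g(x)=x$, hence $f(x)=fg(x)=x$, so $x$ lies in both images, again by (4). Your route is more elementary and self-contained: it needs only the fixed-point characterization (4) applied to $f$, $g$ and $fg$, and avoids both item (5) and Corollary \ref{coro_subseteq_iff_f1_leq_f2}, whose own proof rests on the heavier machinery of \ref{theo_dbpr_equivalences} through \ref{prop_A_subseteq_iff_f_leq}; the paper's route, conversely, showcases and reuses the order-versus-image correspondence it has just built. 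You also correctly flag and discharge the one hypothesis the paper leaves implicit, namely that $fg\leq\id_X$ (so that $(fg)^{\infty}$ is defined and item (4) legitimately applies to $fg$); with that checked, your reverse inclusion coincides with the paper's, and the whole argument is sound.
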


\begin{proof}
The first two items follow easily from the definition of $f^{\infty}$. The third item follows from items (1) and (2) and theorem \ref{theo_dbpr_equivalences}. The proof of items (4) and (5) are easy and will be omitted.

Now we will prove (6). Since $fg\leq f$, $(fg)^\infty\leq f^\infty$. Thus, $(fg)^\infty(X)\subseteq f^\infty(X)$ by \ref{coro_subseteq_iff_f1_leq_f2}. Similarly, $(fg)^\infty(X)\subseteq g^\infty(X)$. Hence $(fg)^\infty(X)\subseteq f^\infty(X) \cap g^\infty(X)$. Now, if $x\in f^\infty(X) \cap g^\infty(X)$ then $f(x)=x=g(x)$ by item (4). Thus, $x=(fg)^\infty(x)\in (fg)^\infty(X)$.
\end{proof}

\begin{prop}
	\label{prop_F_closed_by_fginfty}
	Let $X$ be a finite T$_0$--space and let $A\subseteq X$.
	
	Then, for every $f,g\in \F(X,A)$:
	\begin{enumerate}
		\item $(fg)^{\infty}\in \F(X,A)$, and
		\item $(fg)^{\infty}(X)=f(X)\cap g(X)$.
	\end{enumerate}
\end{prop}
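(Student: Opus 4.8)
The plan is to reduce everything to Lemma \ref{lemma_f_infty}, whose item (6) already identifies $(fg)^{\infty}(X)$ with $f^{\infty}(X)\cap g^{\infty}(X)$. The key observation is that since $f$ and $g$ are idempotent with $f\leq\id_X$ and $g\leq\id_X$, their iterates stabilize immediately: from $f^{2}=f$ we get $f^{\infty}=f$ (take $N=1$ in the definition of $f^{\infty}$), and likewise $g^{\infty}=g$. I would record this simple fact first.

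Granting this, item (2) becomes immediate: Lemma \ref{lemma_f_infty}(6) yields
\begin{displaymath}
	(fg)^{\infty}(X)=f^{\infty}(X)\cap g^{\infty}(X)=f(X)\cap g(X).
\end{displaymath}

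For item (1), I would first note that $fg\leq\id_X$, since for every $x\in X$ we have $fg(x)=f(g(x))\leq g(x)\leq x$; hence $(fg)^{\infty}$ is well defined. Lemma \ref{lemma_f_infty}(1) and (2) then give $(fg)^{\infty}\leq\id_X$ and $\bigl((fg)^{\infty}\bigr)^{2}=(fg)^{\infty}$. It only remains to verify the image condition $A\subseteq(fg)^{\infty}(X)$, and this follows from item (2): because $f,g\in\F(X,A)$ we have $A\subseteq f(X)$ and $A\subseteq g(X)$, so $A\subseteq f(X)\cap g(X)=(fg)^{\infty}(X)$. Therefore $(fg)^{\infty}\in\F(X,A)$.

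There is no real obstacle in this argument; all the substantive work has already been carried out in Lemma \ref{lemma_f_infty}(6). The only point requiring a little care is the order of presentation: the membership claim in item (1) depends on the image computation in item (2), so I would establish item (2) first and then deduce item (1), even though the statement lists them in the opposite order.
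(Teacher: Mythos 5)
Your proof is correct and follows essentially the same route as the paper's: both reduce the statement to Lemma \ref{lemma_f_infty}, using the observation that $f^{\infty}=f$ and $g^{\infty}=g$ for idempotents, item (6) for the image identity, and items (1) and (2) for the remaining membership conditions. Your explicit remarks that $fg\leq\id_X$ (so $(fg)^{\infty}$ is well defined) and that item (2) should be established before item (1) are minor points of care that the paper leaves implicit.
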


\begin{proof}
Let $f,g\in \F(X,A)$. Note that $f^\infty=f$ and $g^\infty=g$. Thus
\begin{displaymath}
A \subseteq f(X) \cap g(X) = f^\infty(X) \cap g^\infty(X) = (fg)^\infty(X)
\end{displaymath}
by item (6) of \ref{lemma_f_infty}. The result follows from items (1) and (2) of \ref{lemma_f_infty}.
\end{proof}

\begin{prop}
	Let $X$ be a finite T$_0$--space and let $A\subseteq X$.
		
	Then, $\Omega(X,A)$ is closed under intersections. In particular, $\Omega(X,A)$ has a minimum.
	\label{prop_minimum_dbpr}
\end{prop}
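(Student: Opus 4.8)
The plan is to deduce both assertions almost directly from Proposition \ref{prop_F_closed_by_fginfty} together with the order isomorphism of Proposition \ref{prop_Omega_iso_F}. First I would establish closure under pairwise intersections. Given $W_1,W_2\in\Omega(X,A)$, the isomorphism $\varphi\colon\F(X,A)\to\Omega(X,A)$, $\varphi(h)=h(X)$, furnishes maps $f,g\in\F(X,A)$ with $f(X)=W_1$ and $g(X)=W_2$. By Proposition \ref{prop_F_closed_by_fginfty}, the map $(fg)^{\infty}$ again lies in $\F(X,A)$ and satisfies $(fg)^{\infty}(X)=f(X)\cap g(X)=W_1\cap W_2$. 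Since the image of any element of $\F(X,A)$ is a dbp--retract of $X$ containing $A$ (which is exactly the defining condition of $\F(X,A)$, via Theorem \ref{theo_dbpr_equivalences}), it follows that $W_1\cap W_2\in\Omega(X,A)$.

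For the existence of a minimum, I would first observe that $\Omega(X,A)$ is a finite poset which is nonempty: since $X$ is a dbp--retract of itself and $A\subseteq X$, we have $X\in\Omega(X,A)$. An easy induction then upgrades closure under pairwise intersections to closure under arbitrary finite intersections. Hence the intersection $\bigcap_{W\in\Omega(X,A)}W$ of all the (finitely many) elements of $\Omega(X,A)$ is itself an element of $\Omega(X,A)$, and by construction it is contained in every element of $\Omega(X,A)$; therefore it is the minimum.

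I do not expect any genuine obstacle here, since the statement is essentially a repackaging of Proposition \ref{prop_F_closed_by_fginfty} under the correspondence $\varphi$. The only points that warrant a moment's care are verifying that $\Omega(X,A)$ is nonempty (so that a minimum element can exist at all) and noting that the finiteness of $X$—hence of $\Omega(X,A)$—is what allows one to pass from pairwise intersections to the full intersection; both are routine.
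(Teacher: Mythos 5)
Your proof is correct and follows essentially the same route as the paper: the paper also obtains closure under pairwise intersections by pulling $W_1,W_2$ back through the bijection $\varphi$ of Proposition \ref{prop_Omega_iso_F} and applying Proposition \ref{prop_F_closed_by_fginfty} to get $W_1\cap W_2=(f_1f_2)^{\infty}(X)\in\Omega(X,A)$. The paper leaves the passage from pairwise closure to the existence of a minimum implicit, whereas you spell it out (nonemptiness via $X\in\Omega(X,A)$, finiteness, induction); this is a harmless elaboration, not a different argument.
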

\begin{proof}
	Let $W_1,W_2\in\Omega(X,A)$. Let $\varphi$ be as in the proof of \ref{prop_Omega_iso_F} and let $f_k=\varphi^{-1}(W_k)$ for $k=1,2$. By \ref{prop_F_closed_by_fginfty}, we have that $W_1\cap W_2=f_1(X)\cap f_2(X)=(f_1f_2)^{\infty}(X)\in \Omega(X,A)$. 
\end{proof}
\begin{coro}
	Let $X$ be a finite T$_0$--space and let $A$ and $B$ be two dbp--retracts of $X$. Then $A\cap B$ is a dbp--retract of $X$. 
\end{coro}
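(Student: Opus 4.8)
The plan is to deduce this immediately from Proposition~\ref{prop_minimum_dbpr}, specialized to the empty base subspace. The point is that $\Omega(X,\varnothing)=\{W\subseteq X: W\text{ is a dbp--retract of }X\}$ is exactly the collection of \emph{all} dbp--retracts of $X$, since the containment condition $\varnothing\subseteq W$ is vacuous. Thus the assertion that $\Omega(X,\varnothing)$ is closed under intersections is literally the claim that the intersection of two dbp--retracts of $X$ is again a dbp--retract of $X$.

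Concretely, first I would observe that, since $A$ and $B$ are dbp--retracts of $X$, both belong to $\Omega(X,\varnothing)$. Then, applying Proposition~\ref{prop_minimum_dbpr} with empty base subspace, the poset $\Omega(X,\varnothing)$ is closed under intersections, so $A\cap B\in\Omega(X,\varnothing)$; that is, $A\cap B$ is a dbp--retract of $X$, as desired.

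Since all the genuine work has already been carried out in establishing the order isomorphism $\F(X,\varnothing)\cong\Omega(X,\varnothing)$ (Proposition~\ref{prop_Omega_iso_F}) and the closure of $\F(X,\varnothing)$ under the operation $(f,g)\mapsto (fg)^{\infty}$ (Proposition~\ref{prop_F_closed_by_fginfty}), there is essentially no obstacle remaining here; the corollary is a direct reformulation. Should one prefer a self-contained argument that bypasses the language of $\Omega$, I would instead transport $A$ and $B$ to the idempotents $f,g\in\F(X,\varnothing)$ with $f(X)=A$ and $g(X)=B$ furnished by Theorem~\ref{theo_dbpr_equivalences}, and then invoke item (2) of Proposition~\ref{prop_F_closed_by_fginfty} to identify $A\cap B=f(X)\cap g(X)=(fg)^{\infty}(X)$, which is a dbp--retract of $X$ by item (3) of Lemma~\ref{lemma_f_infty} applied to $fg$.
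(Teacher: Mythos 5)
Your proposal is correct and matches the paper's intent exactly: the paper states this corollary without proof precisely because it is the specialization of Proposition~\ref{prop_minimum_dbpr} to $\Omega(X,\varnothing)$, which is the set of all dbp--retracts of $X$. Your alternative self-contained argument via $(fg)^{\infty}$ is also valid, but it merely unrolls the proof of that proposition, so there is no genuine difference in route.
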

\begin{rem}
	\label{rem_algorithm_dbpr}
	Let $X$ be a finite T$_0$--space and let $A$ be a subspace of $X$. Proposition \ref{prop_minimum_dbpr} implies that $A$ is a dbp--retract of $X$ if and only if $A$ is the minimum element of $\Omega(X,A)$. From \ref{prop_dbpr_of_Y_and_sub_of_X_is_dbpr_of_X} it follows that $A$ is a dbp--retract of $X$ if and only if any sequence of successive removals of down beat points of $X$ which do not belong to $A$ ends with the subspace $A$ when all such down beat points have been removed. Therefore, this gives an efficient algorithm to decide whether a subspace of a finite T$_0$--space $X$ is a dbp--retract of $X$.
	
	Observe that Theorem \ref{theo_Ux_cap_A} gives another efficient algorithm for doing the same. These two algorithms are esentially equivalent since after considering a linear extension of the partial order of $X-A$, the algorithm given by \ref{theo_Ux_cap_A} is equivalent to the identification (and removal) of down beat points in increasing order.
\end{rem}

\begin{ex} \label{ex_dbpr}
Let $X=\{a,b,c,d,e,f,g,h\}$ be the T$_0$--space that corresponds to the following Hasse diagram:
\begin{displaymath}
\begin{tikzpicture}[x=2cm,y=2cm]
\tikzstyle{every node}=[font=\footnotesize]
\draw (1,0) node(a){$\bullet$} node  [below] {$a$};
\draw (2,0) node(b){$\bullet$} node [below]{$b$};
\draw (0,1) node(c){$\bullet$} node [left]{$c$};
\draw (1,1) node(d){$\bullet$} node  [left] {$d$};
\draw (2,1) node(e){$\bullet$} node  [right] {$e$};
\draw (3,1) node(f){$\bullet$} node [right]{$f$};
\draw (1,2) node(g){$\bullet$} node [above]{$g$};
\draw (2,2) node(h){$\bullet$} node [above]{$h$};
\draw (g) -- (c);
\draw (g) -- (e);
\draw (g) -- (f);
\draw (h) -- (d);
\draw (h) -- (e);
\draw (h) -- (f);
\draw (c) -- (a);
\draw (c) -- (b);
\draw (d) -- (a);
\draw (d) -- (b);
\draw (e) -- (a);
\draw (f) -- (b);

\end{tikzpicture}
\end{displaymath}
Let $A=\{a,b,c,d\}$ and let $B=\{a,b,d,g\}$, both of them considered as subspaces of $X$.
We can obtain the subspace $A$ from $X$ by successively removing the down beat points $e$, $f$, $g$ and $h$.

Now, we can obtain $B$ by successively removing the beat points $e$, $f$, $h$ and $c$. Thus, $B$ is a strong deformation retract of $X$. However,
the set $\{a,b,c,d,g\}$ is minimal, with respect to set inclusion, among the dbp--retracts of $X$ that contain $B$. This dbp--retract is not equal to $B$, and hence $B$ is not a dbp--retract of $X$.
Observe that the same conclusion is achieved applying \ref{theo_Ux_cap_A} and noting that the set $U^{X}_c\cap B$ does not have a maximum element. 
\end{ex}

\section{Cofibrations between finite topological spaces}
\label{sec-cofibrations}

By \emph{cofibration} we will mean \emph{Hurewicz cofibration}, that is, a continuous function which has the homotopy extension property with respect to all topological spaces.

It is well known that every cofibration is a homeomorphism onto its image \cite{dieck2008algebraic,hatcher2002algebraic}.

A classic result states that if $X$ is a topological space and $A$ is a subspace of $X$ such that the inclusion of $A$ into $X$ is a cofibration, then $X\times \{0\}\cup A\times I$ is a retract of $X\times I$. The converse is easy to prove when $A$ is a closed subspace of $X$ and was proved without this assumption by Str\o{}m in \cite{Strom1968note} using the following lemma.

\begin{lemma}[Str\o{}m, {\cite[Lemma 3]{Strom1968note}}]
	\label{lemma_C_open}
	Let $X$ be a topological space and let $A$ be a subspace of $X$ such that $X\times \{0\}\cup A\times I$ is a retract of $X\times I$. Then, a subset $C$ of $X\times \{0\}\cup A\times I$ is open if and only if $C\cap X\times\{0\}$ is open in $X\times\{0\}$ and $C\cap A\times I$ is open in $A\times I$.
\end{lemma}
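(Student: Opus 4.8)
The ``only if'' direction is immediate: since $X\times\{0\}$ and $A\times I$ are subspaces of $Z:=X\times\{0\}\cup A\times I$, the traces $C\cap X\times\{0\}$ and $C\cap A\times I$ of any open $C\subseteq Z$ are open in the respective subspaces, with no hypothesis needed. So the whole content is the converse, and the plan is to exploit a retraction $r\colon X\times I\to Z$ of the inclusion. Writing $r=(r_1,r_2)$ with $r_2\colon X\times I\to I$ the height coordinate, I would reduce the statement to showing that $r^{-1}(C)$ is open in $X\times I$: this suffices because $r$ fixes $Z$ pointwise, so $C=r^{-1}(C)\cap Z$, and hence $r^{-1}(C)$ open in $X\times I$ forces $C$ open in the subspace $Z$.

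I would then check that $r^{-1}(C)$ is open at each of its points by splitting $X\times I$ according to whether $r$ lands in the interior part $A\times(0,1]$ or on the bottom $X\times\{0\}$. Let $Q=r_2^{-1}((0,1])$, an open set on which $r$ takes values in $A\times(0,1]\subseteq A\times I$. Using that $C\cap A\times I$ is open in $A\times I$, choose an open $G_1\subseteq X\times I$ with $G_1\cap(A\times I)=C\cap A\times I$; then for $p\in r^{-1}(C)\cap Q$ one has $r(p)\in C\cap A\times I\subseteq G_1$, and $r^{-1}(G_1)\cap Q$ is an open neighbourhood of $p$ contained in $r^{-1}(C)$, since every $q$ in it satisfies $r(q)\in G_1\cap(A\times I)=C\cap A\times I\subseteq C$. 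Thus $r^{-1}(C)$ is automatically open along $Q$, and the only points requiring real work are those $p$ with $r(p)\in X\times\{0\}$, i.e. $r(p)\in C\cap X\times\{0\}$.

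For such a point $p$, the openness of the bottom trace handles the nearby points that are again sent to $X\times\{0\}$: picking an open $G_0=V_0\times I$ with $V_0\times\{0\}=C\cap X\times\{0\}$, one has $r(p)\in G_0$, and any $q\in r^{-1}(G_0)$ with $r(q)\in X\times\{0\}$ satisfies $r(q)\in G_0\cap X\times\{0\}=C\cap X\times\{0\}\subseteq C$. So the crux is to rule out that arbitrarily close to $p$ there are points of $Q$ whose $r$-image has escaped $C\cap A\times I$. By continuity of $r$ this reduces to proving that no point of $C\cap X\times\{0\}$ lies in the closure, taken in $X\times I$, of $E:=(A\times I)\setminus C$; equivalently, that the ``reach'' of $C$ into the fibre direction cannot degenerate to zero as one approaches a bottom point of $C$, so that some box $U\times[0,\varepsilon)$ meets $A\times I$ only inside $C\cap A\times I$.

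I expect this last point to be the main obstacle. The delicate feature is that $X$ need not be Hausdorff (the intended applications are to finite, hence typically non-$T_2$, spaces), so $A\times I$ need not be closed in $X\times I$ and the naive argument ``$r$ fixes the escaping points, so their limit is fixed'' collapses because limits are not unique; consequently the retraction must be used genuinely rather than through bare continuity on $Z$. The tool I would bring in is the compactness of $I$: via the tube lemma and the continuity of $r$ on the \emph{entire} cylinder $X\times I$, I would try to produce, around the bottom point $p$, a uniform neighbourhood $U\times[0,\varepsilon)$ on which $r$ stays inside a prescribed neighbourhood of $r(p)$, forcing the putative escaping points back into $C\cap A\times I$ and so contradicting their existence. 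Extracting this uniform fibrewise control from the mere existence of the retraction is the heart of the matter; once it is available, the two open pieces $r^{-1}(G_1)\cap Q$ and $r^{-1}(G_0)$ combine into an open neighbourhood of $p$ inside $r^{-1}(C)$, and the proof concludes.
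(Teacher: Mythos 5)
First, a remark on what you are being compared against: the paper never proves this statement. It is quoted from Str\o{}m, and what the authors actually prove is the finite analogue, Lemma \ref{lemma_C_open_finite_subspace}, in which the retraction hypothesis is dropped entirely and the finiteness of $A$ does all the work (finiteness of $U\cap A$ yields a uniform $\varepsilon>0$ with $(U\cap A)\times[0,\varepsilon)\subseteq C$). Measured as an attempt at Str\o{}m's general lemma, your proposal contains a genuine gap. The easy direction, the reduction to showing that $r^{-1}(C)$ is open (valid because $r$ fixes $Z=X\times\{0\}\cup A\times I$ pointwise, so $C=r^{-1}(C)\cap Z$), and the treatment of points sent into $A\times(0,1]$ are all correct. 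But the final claim --- that no point of $C\cap X\times\{0\}$ lies in the closure of $E=(A\times I)\setminus C$ --- is exactly where the entire content of the lemma sits, and you leave it unproven. Indeed, given the rest of your argument, that claim is \emph{equivalent} to the lemma: if $C$ is open in $Z$ then any bottom point of $C$ has an open neighbourhood in $X\times I$ whose trace on $Z$ lies in $C$, hence misses $E$; and conversely, as you show. So what you have written is a correct reformulation of the problem together with a declaration that the reformulated problem should be solvable.

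Moreover, the mechanism you sketch for closing the gap is circular. Since $r$ restricted to $Z$ is the identity, every ``escaping'' point $(a,s)\in E$ satisfies $r(a,s)=(a,s)$; so the statement ``$r$ maps a box $N\times[0,\varepsilon)$ around $p$ into a prescribed neighbourhood $G$ of $r(p)$'', evaluated at such a point, says only that $(a,s)\in G$ --- which you already knew, since the escaping points were chosen to accumulate at $p$. To convert $(a,s)\in G$ into $(a,s)\in C$ you would need $G\cap(A\times I)\subseteq C$, and producing such an open $G\ni p$ \emph{is} the claim you are trying to prove; neither bare continuity of $r$ at $p$ nor the tube lemma can break this circle, because the retraction carries no information at points of $A\times I$. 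All of its information sits at points \emph{outside} $Z$, and a real proof must exploit its behaviour there. As an illustration of the kind of genuinely global consequence one can and must extract: for fixed $t$, the set $\{x\in X: r_2(x,t)=t\}$ is closed (it is the preimage of a point of the Hausdorff space $I$ under a continuous map) and contains $A$, hence contains $\overline{A}$; therefore $r(x,t)\in A\times\{t\}$ for every $x\in\overline{A}$ and every $t>0$. Nothing of this sort appears in, or follows from, your sketched step. Finally, note that if what you actually need is the case the paper uses ($A$ finite), then the retraction hypothesis is superfluous and the uniform $\varepsilon$ comes for free from finiteness, which is precisely the proof of \ref{lemma_C_open_finite_subspace}.
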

From this lemma, he obtained the following characterization of cofibrations.
\begin{theo}[Str\o{}m, {\cite[Theorem 2]{Strom1968note}}]
	\label{theo_strom}
	Let $X$ be a topological space and let $A$ be a subspace of $X$. Then, the inclusion $i\colon A\hookrightarrow X$ is a cofibration if and only if $X\times \{0\}\cup A\times I$ is a retract of $X\times I$.
\end{theo}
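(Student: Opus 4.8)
The plan is to prove both implications directly from the definition of the homotopy extension property (HEP), invoking the cited Lemma \ref{lemma_C_open} only where it is genuinely needed, namely in the harder direction.

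For the forward implication, suppose $i\colon A\hookrightarrow X$ is a cofibration. I would apply the HEP with the test space taken to be $X\times\{0\}\cup A\times I$ itself: consider the map $g\colon X\to X\times\{0\}\cup A\times I$ given by $g(x)=(x,0)$ together with the tautological homotopy $K\colon A\times I\to X\times\{0\}\cup A\times I$ given by $K(a,t)=(a,t)$. These agree on $A\times\{0\}$, so by the HEP there is a continuous extension $\rho\colon X\times I\to X\times\{0\}\cup A\times I$ with $\rho(x,0)=(x,0)$ and $\rho(a,t)=(a,t)$. Since $\rho$ then restricts to the identity on $X\times\{0\}\cup A\times I$, it is precisely the desired retraction.

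For the converse, let $\rho\colon X\times I\to X\times\{0\}\cup A\times I$ be a retraction. To verify the HEP, fix an arbitrary space $Y$, a map $f\colon X\to Y$ and a homotopy $H\colon A\times I\to Y$ with $H(a,0)=f(a)$. I would first assemble the map $G\colon X\times\{0\}\cup A\times I\to Y$ defined by $G(x,0)=f(x)$ and $G(a,t)=H(a,t)$, which is well defined because $f$ and $H$ agree on $A\times\{0\}$. Setting $\widetilde{H}=G\circ\rho$ then produces a homotopy $\widetilde{H}\colon X\times I\to Y$ extending the given data: since $\rho$ fixes $X\times\{0\}\cup A\times I$ pointwise, one computes $\widetilde{H}(x,0)=G(x,0)=f(x)$ and $\widetilde{H}(a,t)=G(a,t)=H(a,t)$.

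The one genuine point requiring care, and the main obstacle, is the continuity of $G$. Its restrictions to $X\times\{0\}$ and to $A\times I$ are $f$ and $H$ respectively, both continuous; but in general this does not suffice for continuity on the union, since the subspace topology that $X\times\{0\}\cup A\times I$ inherits from $X\times I$ could be strictly finer than the topology determined by these two pieces. This is exactly where the retract hypothesis enters, through Lemma \ref{lemma_C_open}: that lemma guarantees that a subset of $X\times\{0\}\cup A\times I$ is open precisely when its intersections with $X\times\{0\}$ and with $A\times I$ are both open. Consequently, for any open $V\subseteq Y$, the preimage $G^{-1}(V)$ meets $X\times\{0\}$ in $f^{-1}(V)\times\{0\}$ and meets $A\times I$ in $H^{-1}(V)$, both open, so $G^{-1}(V)$ is open and $G$ is continuous. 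Once continuity of $G$ is secured, the remaining verifications are the routine ones indicated above.
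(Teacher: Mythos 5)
Your proof is correct and follows essentially the same route as the paper indicates for this cited result of Str\o{}m: the forward implication is the standard universal-example HEP argument (test the homotopy extension property against $X\times\{0\}\cup A\times I$ itself), and the converse glues $f$ and $H$ into a map on $X\times\{0\}\cup A\times I$ and composes with the retraction, invoking Lemma \ref{lemma_C_open} exactly where it is needed, namely to establish continuity of the glued map when $A$ is not assumed closed.
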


In this section, we will obtain a simple and combinatorial characterization of cofibrations between finite topological spaces and we will show how it is related to the notion of dbp--retracts of section \ref{sec_bpr}. To this end, we will give a simple alternative proof of Lemma \ref{lemma_C_open} in the case that the subspace $A$ is a finite space (Lemma \ref{lemma_C_open_finite_subspace}) from which our characterization of cofibrations between finite spaces will be obtained. 

Our proof of lemma \ref{lemma_C_open_finite_subspace} is based on Str\o{}m's proof of \ref{lemma_C_open}. Nevertheless, it is interesting to observe that, under the assumption that $A$ is a finite space, the hypothesis that $X\times \{0\}\cup A\times I$ is a retract of $X\times I$ of Lemma \ref{lemma_C_open} is not required for Lemma \ref{lemma_C_open_finite_subspace}. Moreover, by \ref{theo_strom}, this hypothesis holds if and only if the inclusion map $A\hookrightarrow X$ is a cofibration. Thus, from the characterization of cofibrations between finite spaces that will be given in this section, one can construct many examples of inclusion maps $A\hookrightarrow X$ for which \ref{lemma_C_open_finite_subspace} can be applied but \ref{lemma_C_open} can not.

\begin{lemma} \label{lemma_C_open_finite_subspace}
	Let $X$ be a topological space, let $A$ be a finite subspace of $X$. Then, a subset $C$ of $X\times\{0\}\cup A\times I$ is open in $X\times\{0\}\cup A\times I$ if and only if $C\cap X\times \{0\}$ is open in $X\times \{0\}$ and $C\cap A\times I$ is open in $A\times I$.
\end{lemma}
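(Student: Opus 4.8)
The plan is to prove the nontrivial implication by a direct point-by-point neighbourhood argument in which the finiteness of $A$ intervenes at exactly one step. Throughout, write $Z=X\times\{0\}\cup A\times I$, regarded as a subspace of $X\times I$. The ``only if'' direction holds for any subspace $A$ and is immediate, since intersecting an open subset of $Z$ with $X\times\{0\}$ and with $A\times I$ produces open subsets of those two subspaces. For the converse I would fix $C\subseteq Z$ with $C\cap X\times\{0\}$ open in $X\times\{0\}$ and $C\cap A\times I$ open in $A\times I$, and show that every $p\in C$ is interior to $C$ in $Z$; because $Z$ carries the subspace topology, it suffices to exhibit, for each such $p$, an open set $V\subseteq X\times I$ with $p\in V$ and $V\cap Z\subseteq C$. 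I would organise the argument according to whether the second coordinate of $p$ is positive or zero.

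For $p=(a,t)$ with $a\in A$ and $t>0$, I would work inside $A\times I$ and then lift. Let $U^{A}_a$ be the minimal open neighbourhood of $a$ in the finite space $A$; using that $C\cap A\times I$ is open, choose an open interval $J\subseteq(0,\infty)$ with $t\in J$ and $U^{A}_a\times(J\cap I)\subseteq C$. Since $U^{A}_a$ is open in the subspace $A$, there is an open $W\subseteq X$ with $W\cap A=U^{A}_a$, and I would take $V=W\times J$. As $J$ avoids $0$, the set $V$ misses $X\times\{0\}$ entirely, so $V\cap Z=(W\cap A)\times(J\cap I)=U^{A}_a\times(J\cap I)\subseteq C$. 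No finiteness of $A$ is needed in this case.

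The delicate case, and where finiteness is indispensable, is $p=(x,0)$, irrespective of whether $x\in A$. Using that $C\cap X\times\{0\}$ is open, pick an open $O_0\subseteq X$ with $x\in O_0$ and $O_0\times\{0\}\subseteq C$. For each of the \emph{finitely many} points $a\in O_0\cap A$ we have $(a,0)\in O_0\times\{0\}\subseteq C$, so by openness of $C\cap A\times I$ there is a height $\epsilon_a>0$ with $\{a\}\times[0,\epsilon_a)\subseteq C$. Setting $\epsilon=\min_{a\in O_0\cap A}\epsilon_a>0$ (and $\epsilon=1$ when $O_0\cap A=\varnothing$), the open tube $V=O_0\times[0,\epsilon)$ satisfies $V\cap Z=(O_0\times\{0\})\cup\big((O_0\cap A)\times[0,\epsilon)\big)\subseteq C$. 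The heart of the proof is precisely the choice of one positive tube height $\epsilon$ valid simultaneously at all points of $A$ lying over $O_0$: this is what Str\o{}m extracted from the retract hypothesis in \ref{lemma_C_open}, whereas here the finiteness of $A$ makes the minimum of the $\epsilon_a$ positive and supplies it directly. Once these two neighbourhood constructions are in place, every point of $C$ is interior and the lemma follows.
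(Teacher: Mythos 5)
Your proof is correct and is essentially the paper's own argument in pointwise form: both split $C$ into its level-zero part and its positive-height part, handle the latter using that $A\times(0,1]$ is open in $X\times\{0\}\cup A\times I$, and use the finiteness of $A$ in exactly the same way, namely to extract a single positive tube height as the minimum of finitely many $\epsilon_a>0$ over an open set at level zero. The paper packages this globally -- setting $U=\{x\in X:(x,0)\in C\}$, choosing one $\varepsilon>0$ valid for all of $U\cap A$ at once, and writing $C=\big(U\times[0,\varepsilon)\cap Y\big)\cup\big(C\cap A\times(0,1]\big)$ as a union of two sets open in $Y$ -- whereas you verify interiority point by point, but the mathematical content is the same.
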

\begin{proof}
	Suppose that $C\cap X\times \{0\}$ is open in $X\times \{0\}$ and $C\cap A\times I$ is open in $A\times I$. Let $Y=X\times \{0\}\cup A\times I$. 
	Let $U=\{x\in X:(x,0)\in C\}$. It is clear that $U$ is open in $X$. Since $U\cap A$ is finite and $C\cap A\times I$ is open in $A\times I$, then there exists $\varepsilon>0$ such that $(U\cap A)\times [0,\varepsilon)\subseteq C$.
	
	It is easy to see that $C=\big(U\times [0,\varepsilon)\cap Y\big)\cup \big(C\cap A\times (0,1]\big)$.
	
	Furthermore, $U\times [0,\varepsilon)\cap Y$ is clearly open in $Y$. On the other hand, since $C\cap (A\times(0,1])$ is open in $A\times (0,1]$ and $A\times (0,1]$ is open in $Y$, it follows that $C\cap(A\times (0,1])$ is open in $Y$. Thus, $C$ is open in $Y$.
	
	The converse is clear.
\end{proof}

The following proposition follows easily from the previous lemma.

\begin{prop}
	Let $X$ and $Z$ be topological spaces and let $A$ be a finite subspace of $X$. Let $f\colon X\to Z$ and $H\colon A\times I \to Z$ be two continuous functions such that $H(a,0)=f(a)$ for every $a\in A$. Then, the function $\phi\colon X\times\{0\}\cup A\times I\to Z$ defined by
	\begin{displaymath}
		\phi(x,t)=\begin{cases}f(x)&\text{ if $t=0$,}\\H(x,t)&\text{ if $x\in A$}\end{cases}
	\end{displaymath}
	is continuous.
	
	Equivalently, $X\times\{0\}\cup A\times I$ is the mapping cylinder of the inclusion map $A\hookrightarrow X$.
\end{prop}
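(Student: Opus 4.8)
The plan is to deduce the continuity of $\phi$ directly from Lemma \ref{lemma_C_open_finite_subspace}, after a preliminary well-definedness check. First I would verify that $\phi$ is well defined: the two clauses of its definition overlap exactly on $A\times\{0\}$, where the first gives $f(a)$ and the second gives $H(a,0)$, and these agree precisely because of the hypothesis $H(a,0)=f(a)$. Writing $Y=X\times\{0\}\cup A\times I$, the task is then to show that $\phi^{-1}(V)$ is open in $Y$ for every open subset $V$ of $Z$, and by Lemma \ref{lemma_C_open_finite_subspace} this is equivalent to the openness of the two traces $\phi^{-1}(V)\cap X\times\{0\}$ in $X\times\{0\}$ and $\phi^{-1}(V)\cap A\times I$ in $A\times I$.

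Next I would compute these two traces explicitly. A point $(x,0)$ lies in the first trace exactly when $f(x)\in V$, so that trace equals $f^{-1}(V)\times\{0\}$, which is open in $X\times\{0\}$ because $f$ is continuous. For the second trace, the key observation is that on all of $A\times I$ the map $\phi$ coincides with $H$; this includes the seam $t=0$, where $\phi(a,0)=f(a)=H(a,0)$, so no special treatment of that slice is needed. Consequently $\phi^{-1}(V)\cap A\times I=H^{-1}(V)$, which is open in $A\times I$ since $H$ is continuous. Feeding both facts back into Lemma \ref{lemma_C_open_finite_subspace} shows that $\phi^{-1}(V)$ is open in $Y$, whence $\phi$ is continuous.

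For the ``equivalently'' assertion, I would note that the mapping cylinder of the inclusion $A\hookrightarrow X$ is by definition the pushout gluing $X$ to $A\times I$ along $A\cong A\times\{0\}$, and its universal property is exactly the statement that every pair of continuous maps $f\colon X\to Z$ and $H\colon A\times I\to Z$ agreeing on $A\times\{0\}$ assembles into a unique continuous map out of $Y$. Uniqueness is immediate because $X\times\{0\}$ and $A\times I$ cover $Y$, and existence is precisely the continuity of $\phi$ just established; thus the subspace $Y\subseteq X\times I$ carries the mapping-cylinder topology. Honestly, there is no serious obstacle here: once Lemma \ref{lemma_C_open_finite_subspace} is in hand the argument is routine, and the only point requiring a moment's care is recognizing that $\phi$ restricted to $A\times I$ equals $H$ even on the seam $t=0$, so that the second trace is literally $H^{-1}(V)$. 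The finiteness of $A$ plays no role in this proposition beyond having already been spent in proving the lemma.
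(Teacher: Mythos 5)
Your proof is correct and follows exactly the route the paper intends: the paper states that the proposition ``follows easily from the previous lemma,'' and your argument is precisely that routine deduction, checking the two traces $f^{-1}(V)\times\{0\}$ and $H^{-1}(V)$ and invoking Lemma \ref{lemma_C_open_finite_subspace}, plus the standard universal-property argument for the mapping cylinder claim. Nothing is missing; your closing remark that the finiteness of $A$ is used only inside the lemma is also accurate.
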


From the previous proposition we obtain the following corollary which is a particular case of \ref{theo_strom} but was obtained with a much simpler proof.

\begin{coro}
	\label{coro_ret_gives_cof}
	Let $X$ be a topological space, let $A$ be a finite subspace of $X$ and let $i\colon X\times \{0\}\cup A\times I\to X\times I$ be the inclusion map. Then the inclusion $A\hookrightarrow X$ is a cofibration if and only if there exists a retraction $r$ of $i$. 
\end{coro}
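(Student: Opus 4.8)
The plan is to recognize this corollary as a restatement of Str\o{}m's Theorem \ref{theo_strom} in the case at hand, and to observe that, once $A$ is finite, both implications become short consequences of the preceding proposition. Indeed, the phrase ``$X\times\{0\}\cup A\times I$ is a retract of $X\times I$'' in \ref{theo_strom} means precisely that the inclusion $i$ admits a retraction, so the two statements have the same shape. The point is therefore to avoid invoking \ref{theo_strom} (and hence Str\o{}m's original Lemma \ref{lemma_C_open}) and instead to derive the homotopy extension property directly from the mapping cylinder description supplied by the previous proposition.

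For the implication that a cofibration yields a retraction, I would apply the homotopy extension property of $A\hookrightarrow X$ to the target $Z=X\times\{0\}\cup A\times I$ itself, taking $f\colon X\to Z$ given by $f(x)=(x,0)$ and $H\colon A\times I\to Z$ given by $H(a,t)=(a,t)$. These maps agree on $A\times\{0\}$, so the homotopy extension property furnishes a continuous map $\widetilde H\colon X\times I\to Z$ with $\widetilde H(x,0)=(x,0)$ and $\widetilde H(a,t)=(a,t)$. Since $\widetilde H$ then restricts to the identity on $X\times\{0\}\cup A\times I$, it is exactly a retraction of $i$. Note that this direction does not use the finiteness of $A$.

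For the converse, suppose $r$ is a retraction of $i$. To verify the homotopy extension property, let $Z$ be any space and let $f\colon X\to Z$ and $H\colon A\times I\to Z$ satisfy $H(a,0)=f(a)$ for all $a\in A$. By the preceding proposition the map $\phi\colon X\times\{0\}\cup A\times I\to Z$ with $\phi(x,0)=f(x)$ and $\phi(a,t)=H(a,t)$ is continuous. Then $\phi\circ r\colon X\times I\to Z$ is continuous and, using $r(x,0)=(x,0)$ together with $r(a,t)=(a,t)$, it satisfies $(\phi\circ r)(x,0)=f(x)$ and $(\phi\circ r)(a,t)=H(a,t)$. Hence $\phi\circ r$ is the required extension, so $A\hookrightarrow X$ is a cofibration.

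The only step that genuinely uses the finiteness of $A$ is the continuity of $\phi$, and this has already been packaged into the preceding proposition (which rests on Lemma \ref{lemma_C_open_finite_subspace}). I therefore expect no real obstacle beyond two routine checks: that $\phi$ is well defined on the overlap $A\times\{0\}$, where the two cases give $f(a)$ and $H(a,0)$, which coincide by hypothesis; and that the retraction $r$ indeed restricts to the identity on $X\times\{0\}\cup A\times I$, so that the boundary conditions of the homotopy extension property are met in both directions.
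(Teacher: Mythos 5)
Your proof is correct and is essentially the argument the paper intends: the converse direction glues $f$ and $H$ via the preceding proposition (where the finiteness of $A$ is packaged) and composes with the retraction, while the forward direction is the standard application of the homotopy extension property with target $X\times\{0\}\cup A\times I$. This matches the paper, which presents the corollary as an immediate consequence of that proposition rather than of Str\o{}m's theorem.
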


The following is one of the main results of this article.
\begin{theo}
	Let $X$ be a connected finite topological space and let $A$ be a non-empty subspace of $X$. Then, the inclusion $i\colon A\hookrightarrow X$ is a cofibration if and only if there exists a retraction $r\colon X\to A$ of $i$ such that $ir\leq \id_X$. 
	\label{theo_cofibration}
\end{theo}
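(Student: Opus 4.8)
The plan is to reduce everything to the existence of a single retraction of the inclusion $X\times\{0\}\cup A\times I\hookrightarrow X\times I$, which is available because $A$, being a subspace of the finite space $X$, is itself finite: by Corollary~\ref{coro_ret_gives_cof}, the inclusion $i$ is a cofibration if and only if there is a retraction $R\colon X\times I\to M$, where $M=X\times\{0\}\cup A\times I$. I would prove each implication by passing between such an $R$ and the desired order-retraction $r$.

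For the implication ($\Leftarrow$), given a retraction $r\colon X\to A$ with $ir\le\id_X$, I would define $R\colon X\times I\to M$ by $R(x,0)=(x,0)$ and $R(x,t)=(r(x),t)$ for $t>0$; since $r(x)\in A$ this lands in $M$, and $R$ is visibly a set-theoretic retraction. The real work is continuity, and here I would invoke Lemma~\ref{lemma_C_open_finite_subspace}: a set $C$ is open in $M$ exactly when $C\cap X\times\{0\}=U\times\{0\}$ for some open $U\subseteq X$ and $C\cap A\times I$ is open in $A\times I$. Writing $\rho(x,t)=(r(x),t)$, the preimage $R^{-1}(C)$ is the union of $U\times\{0\}$ with the open set $\rho^{-1}(C\cap A\times I)\cap(X\times(0,1])$, so only the points $(x,0)$ with $x\in U$ need to be checked. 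At such a point I would use that $U_x=\{y:y\le x\}$, that $r$ is order preserving with $r(x)\le x$, and the finiteness of $A$ to produce $\delta>0$ with $(U_{r(x)}\cap A)\times[0,\delta)\subseteq C\cap A\times I$, and then verify $U_x\times[0,\delta)\subseteq R^{-1}(C)$. I note that this direction uses neither connectedness nor the $T_0$ hypothesis.

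For ($\Rightarrow$), I would start from a retraction $R=(u,v)\colon X\times I\to M$ and extract $r$ from it. The key observation is that for each fixed $t$ the map $v(\cdot,t)\colon X\to I$ is a continuous map from a connected finite space to a $T_1$ space, hence constant by Lemma~\ref{lemma_cte}; evaluating at any point of the non-empty set $A$, where $R(a,t)=(a,t)$ forces $v(a,t)=t$, yields $v(x,t)=t$ for all $x$. Consequently $R(x,t)=(u(x,t),t)$, and for $t>0$ the positivity of the second coordinate forces $u(x,t)\in A$, so each $u(\cdot,t)$ with $t>0$ is a retraction of $i$. It then remains to choose $t$ so that the order condition holds: from continuity of $u$ at $(x,0)$ and $u(x,0)=x$ there is $\varepsilon_x>0$ with $u(U_x\times[0,\varepsilon_x))\subseteq U_x$, whence $u(x,s)\le x$ for $0<s<\varepsilon_x$. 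Here I use finiteness of $X$ to set $\varepsilon_0=\min_x\varepsilon_x>0$ and define $r=u(\cdot,s_0)$ for a fixed $s_0\in(0,\varepsilon_0)$; this $r$ is a retraction of $i$ with $ir\le\id_X$.

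The main obstacle is the continuity verification in ($\Leftarrow$), where the abrupt definition of $R$ across $t=0$ must be reconciled with the topology of the mapping cylinder; Lemma~\ref{lemma_C_open_finite_subspace} together with the compatibility $r(x)\le x$ is precisely what makes this go through. In ($\Rightarrow$) the conceptual crux is the identity $v(x,t)=t$, obtained from connectedness and the non-emptiness of $A$; once it is in hand the remaining order estimate is a short neighbourhood argument.
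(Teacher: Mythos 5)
Your proof is correct, and its skeleton coincides with the paper's: both reduce the theorem via Corollary \ref{coro_ret_gives_cof} to producing a retraction of the inclusion $X\times\{0\}\cup A\times I\hookrightarrow X\times I$; both use Lemma \ref{lemma_cte} in exactly the same way to show that any such retraction must preserve the $I$-coordinate (so that its $X$-component at a small positive time is the desired $r$); and in the converse direction both construct the very same retraction, namely $(x,0)\mapsto(x,0)$ and $(x,t)\mapsto(r(x),t)$ for $t>0$. The genuine difference lies in how continuity is established. The paper runs both directions through the finite function space $X^{X}$ and Stong's exponential law: in the forward direction the induced map $\phi\colon I\to X^{X}$ is continuous with $\phi(0)=\id_X$, so any $\varepsilon>0$ lying in the open set $\phi^{-1}(U_{\id_X})$ gives $\phi(\varepsilon)\leq\id_X$ instantly; in the converse direction the path $\alpha(0)=\id_X$, $\alpha(t)=ir$ for $t>0$ is continuous precisely because $ir\leq\id_X$, and the exponential law converts it into the retraction. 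You replace this machinery by hands-on point-set arguments: for the forward direction, local neighbourhoods $U_x\times[0,\varepsilon_x)$ together with a minimum over the finitely many $x\in X$ (which is exactly the continuity of $\phi$ at $0$, unpacked), and for the converse, a direct open-set verification via Lemma \ref{lemma_C_open_finite_subspace} using finiteness of $A$ and the compatibility $r(x)\leq x$. Your route is more elementary and self-contained --- it never touches $X^{X}$ or the exponential law --- at the price of longer verifications; the paper's route is shorter and makes visible the conceptual point that the order condition $ir\leq\id_X$ is exactly membership in the minimal open set $U_{\id_X}$ of the function space. Both arguments are complete.
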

\begin{proof}
	Let $Y=X\times\{0\}\cup A\times I$ and let $\iota\colon Y\to X\times I$ be the inclusion. We will prove that $\iota$ has a retraction if and only if $i$ has a retraction $r$ such that $ir\leq\id_X$. Hence, the theorem will follow from \ref{coro_ret_gives_cof} (or \ref{theo_strom}).
	
	Suppose that $\iota$ has a retraction $\rho$. Let $p_X\colon X\times I\to X$ and $p_I\colon X\times I\to I$ the canonical projections. 
	For each $t\in I$, we have a continuous function $i_t\colon X\to X\times I$ defined by $i_t(x)=(x,t)$ for every $x\in X$. Note that $p_I \iota\rho i_t$ is a continuous function from $X$ to $I$, and hence, it is a constant map by \ref{lemma_cte}. Now, if $a\in A$, $p_I \iota\rho i_t(a)=p_I \iota\rho(a,t)=p_I(a,t)=t$. It follows that $p_I \iota\rho i_t(x)=t$ for every $x\in X$.
	
	Let $\phi\colon I\to X^{X}$ be the function induced by the map $p_X\iota\rho\colon X\times I \to X$. Note that $\phi$ is continuous by \cite[Lemma 1]{stong1966finite} and that $\phi(0)=\id_X$. Hence, $0\in \phi^{-1}(U_{\id_X})$ and thus, there exists $\varepsilon>0$ such that $\phi(\varepsilon)\leq \id_X$. It can be readily verified that $\phi(\varepsilon)(x)\in A$ for every $x\in X$. Hence, we can restrict $\phi(\varepsilon)$ to a map  $r\colon X\to A$, and it is clear that $ir=\phi(\varepsilon)\leq \id_X$. And since $\iota\rho$ is the identity on $A\times I$, it easily follows that $ri=\id_A$.

	For the converse, suppose that there is a retraction $r$ of $i$ such that $ir\leq \id_X$. Let $\alpha\colon I\to X^X$ be defined by
	\begin{displaymath}
	\alpha(t)=\begin{cases}\id_X&\text{if $t=0$,}\\ir &\text{if $t>0$.}\end{cases}
	\end{displaymath}
	Since $ir\leq \id_X$, it follows that $\alpha$ is a continuous map. Let $\alpha^\flat \colon X\times I \to X$ be the function induced by $\alpha$ and the exponential law. We obtain that $\alpha^\flat$ is continuous by \cite[Lemma 1]{stong1966finite}.
	
	As above, let $p_I\colon X\times I\to I$ be the canonical projection. Let $\beta\colon X\times I\to X\times I$ be the map induced by $\alpha^\flat$ and $p_I$. It is easy to verify that $\im \beta \subseteq Y$. Let $\rho \colon X\times I\to Y$ be the range restriction of $\beta$ to $Y$. It is not difficult to check that $\rho$ is a retraction of $\iota$.
\end{proof}

\begin{rem} \label{rem_connected_components}
Let $X$ be a finite topological space and let $A$ be a subspace of $X$. Observe that, in order to determine whether the inclusion map $A\hookrightarrow X$ is a cofibration, we may always reduce our analysis to cases in which the hypotheses of the previous theorem are fulfilled. Indeed, since a finite topological space is the coproduct of its connected components, it follows that the inclusion map $A\hookrightarrow X$ is a cofibration if and only if the inclusion map $A\cap C \hookrightarrow C $ is a cofibration for each connected component $C$ of $X$. In addition, since the inclusion of the empty subspace in any topological space is a cofibration, we obtain that the inclusion map $A\hookrightarrow X$ is a cofibration if and only if the inclusion map $A\cap C \hookrightarrow C $ is a cofibration for each connected component $C$ of $X$ such that $C\cap A\neq \varnothing$.
\end{rem}

Theorem \ref{theo_cofibration} yields the unexpected results \ref{coro_cofibr_is_sdr}, \ref{prop_cofibr_closed_subspace} and \ref{coro_closed_cofibr_is_homeo}.

\begin{coro} \label{coro_cofibr_is_sdr}
	Let $X$ be a connected finite space and let $A$ be a non-empty subspace of $X$. If the inclusion $i\colon A\to X$ is a cofibration, then $A$ is a strong deformation retract of $X$.
\end{coro}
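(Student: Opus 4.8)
The plan is to obtain this as an almost immediate consequence of Theorem \ref{theo_cofibration} combined with the homotopy result of Stong \cite{stong1966finite} recalled in Section 2 (namely, that $f\leq g$ in $X^X$ implies $f$ is homotopic to $g$ relative to $\{x\in X:f(x)=g(x)\}$). First I would apply Theorem \ref{theo_cofibration}: since $X$ is connected, $A$ is non-empty, and $i$ is a cofibration, there is a retraction $r\colon X\to A$ of $i$ with $ir\leq\id_X$. The map $ir\colon X\to X$ is the natural candidate for the endpoint of the deformation, since it is continuous, $ir(X)=A$, and $ir\leq\id_X$.

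The key preliminary observation is that $A\subseteq\{x\in X:ir(x)=x\}$. Indeed, because $ri=\id_A$, every $a\in A$ satisfies $ir(a)=i(ri(a))=i(a)=a$. With this in hand I would invoke Stong's result: since $ir\leq\id_X$ in $X^X$, the map $ir$ is homotopic to $\id_X$ relative to $\{x\in X:ir(x)=x\}$, a set which contains $A$. Writing $H\colon X\times I\to X$ for such a homotopy, oriented so that $H(\cdot,0)=\id_X$ and $H(\cdot,1)=ir$, I would verify that $H$ is precisely a strong deformation retraction of $X$ onto $A$: one has $H(x,0)=x$ for all $x\in X$, one has $H(x,1)=ir(x)\in A$ for all $x\in X$, and one has $H(a,t)=a$ for all $a\in A$ and $t\in I$, the last property holding because the homotopy is stationary on a set containing $A$. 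This exhibits $A$ as a strong deformation retract of $X$.

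I do not anticipate any genuine obstacle here, as the statement is essentially a repackaging of the retraction produced by Theorem \ref{theo_cofibration}. The only point needing care is confirming that the relative homotopy supplied by Stong's theorem is stationary on all of $A$ (so that one really obtains a \emph{strong} deformation retract rather than merely a deformation retract), and this reduces to the elementary identity $ir|_A=\id_A$ noted above. Connectedness of $X$ plays no role beyond guaranteeing the hypotheses of Theorem \ref{theo_cofibration}.
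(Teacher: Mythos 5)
Your proof is correct and follows exactly the route the paper intends: the paper states this corollary as an immediate consequence of Theorem \ref{theo_cofibration}, with the implicit argument being precisely your combination of the retraction $r$ satisfying $ir\leq\id_X$ with Stong's relative homotopy result (the same pattern used in Remark \ref{rem_retract_beat_points}). Your verification that the homotopy is stationary on $A$ because $ir|_A=\id_A$ is the only detail the paper leaves unstated, and you handle it correctly.
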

\begin{prop} \label{prop_cofibr_closed_subspace}
	Let $X$ be a connected finite space, let $A$ be a non-empty closed subspace of $X$. If the inclusion $i\colon A\to X$ is a cofibration, then $A=X$.
\end{prop}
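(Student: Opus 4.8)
The plan is to convert the cofibration hypothesis into a purely order-theoretic statement by invoking Theorem \ref{theo_cofibration}, and then to observe that a closed subspace of a finite space behaves very rigidly under an order-decreasing retraction. Since $X$ is connected and finite and $A$ is non-empty, Theorem \ref{theo_cofibration} tells us that the cofibration $i$ admits a retraction $r\colon X\to A$ such that $ir\leq\id_X$. Unwinding the inequality in $X^{X}$ (recall that $f\leq g$ in $X^X$ means $f(z)\leq g(z)$ for all $z$), this says exactly that $r(x)\leq x$ for every $x\in X$, where we identify $r(x)\in A$ with its image under $i$.

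The second ingredient is the translation of the word \emph{closed} into the language of the specialization preorder. Under the Alexandroff correspondence used throughout the paper, the open sets of $X$ are precisely the lower sets of $\leq$, so the closed set $A$ is an \emph{upper set}: if $a\in A$ and $a\leq x$, then $x\in A$. I would state this explicitly, since it is the one place where the hypothesis that $A$ is closed (rather than merely a subspace) enters.

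With both facts in hand the conclusion is immediate. Fix an arbitrary $x\in X$. Then $r(x)\in A$ and $r(x)\leq x$, and since $A$ is an upper set this forces $x\in A$. As $x$ was arbitrary, $A=X$.

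I expect no serious obstacle here: once Theorem \ref{theo_cofibration} is applied, the argument is a two-line deduction. The only step meriting attention—and the one I would present as the crux—is identifying closed subspaces with upper sets of the preorder, after which the monotonicity $r(x)\leq x$ does all the work. Note in particular that no $T_0$ assumption is required, so the statement holds for arbitrary connected finite spaces.
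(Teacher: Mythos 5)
Your proof is correct and is essentially identical to the paper's: both apply Theorem \ref{theo_cofibration} to obtain a retraction $r$ with $ir\leq\id_X$, then conclude $x\geq r(x)\in A$ forces $x\in A$ because $A$ is closed. Your only addition is to spell out explicitly that closed subsets are upper sets in the specialization preorder, a step the paper leaves implicit.
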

\begin{proof}
	Suppose that $i$ is a cofibration. By \ref{theo_cofibration}, there exists $r\colon X\to A$ such that $ri=\id_A$ and $ir\leq \id_X$.
	Now let $x\in X$. We have that $x\geq r(x)\in A$. Since $A$ is closed, $x\in A$. The result follows. 
\end{proof}
\begin{coro} \label{coro_closed_cofibr_is_homeo}
	A closed cofibration between non-empty connected finite spaces is a homeomorphism.
\end{coro}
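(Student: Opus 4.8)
The plan is to reduce the statement to Proposition \ref{prop_cofibr_closed_subspace} by first replacing the abstract map with an honest subspace inclusion. Let $f\colon A\to X$ be a closed cofibration, where $A$ and $X$ are non-empty connected finite spaces. As recalled at the beginning of this section, every cofibration is a homeomorphism onto its image, so $f$ restricts to a homeomorphism $A\to f(A)$. Since $f$ is a \emph{closed} cofibration, its image $f(A)$ is a closed subspace of $X$. Identifying $A$ with the subspace $f(A)$ via this homeomorphism, the map $f$ becomes the inclusion $f(A)\hookrightarrow X$, which is therefore a cofibration.

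Next I would verify that the hypotheses of Proposition \ref{prop_cofibr_closed_subspace} are met for this inclusion. The subspace $f(A)$ is non-empty because $A$ is non-empty, and it is connected because it is homeomorphic to the connected space $A$; moreover $X$ is connected by assumption and $f(A)$ is closed in $X$ as noted above. Applying Proposition \ref{prop_cofibr_closed_subspace} to the cofibration $f(A)\hookrightarrow X$ then yields $f(A)=X$.

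Finally, since $f$ is a homeomorphism onto $f(A)$ and $f(A)=X$, the map $f$ is a homeomorphism of $A$ onto $X$, as desired.

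I do not expect any serious obstacle here, as the argument is essentially a bookkeeping combination of three facts already available: that cofibrations are homeomorphisms onto their images, that a closed cofibration has closed image, and Proposition \ref{prop_cofibr_closed_subspace}. The only point requiring a little care is the correct interpretation of ``closed cofibration'' (namely that the image is closed) and the clean transport of connectedness and non-emptiness across the homeomorphism $A\cong f(A)$, so that Proposition \ref{prop_cofibr_closed_subspace} genuinely applies to the inclusion $f(A)\hookrightarrow X$.
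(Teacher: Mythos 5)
Your proof is correct and follows exactly the route the paper intends: the corollary is stated as an immediate consequence of Proposition \ref{prop_cofibr_closed_subspace}, obtained by identifying the closed cofibration with the inclusion of its (closed, non-empty) image via the standard fact that cofibrations are homeomorphisms onto their images. The only cosmetic remark is that verifying connectedness of $f(A)$ is unnecessary, since Proposition \ref{prop_cofibr_closed_subspace} only requires $X$ to be connected and $A$ to be non-empty and closed.
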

Recall that a pointed space $(X,x_0)$ is said to be \emph{well-pointed} if the inclusion $\{x_0\}\hookrightarrow X$ is a cofibration.  
\begin{prop}
	Let $(X,x_0)$ be a pointed connected finite space. Then $(X,x_0)$ is well-pointed if and only if $x_0\leq x$ for every $x\in X$. In particular, if $X$ is a T$_0$ space, $(X,x_0)$ is well-pointed if and only if $x_0$ is the minimum of $X$.
\end{prop}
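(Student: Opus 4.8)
The plan is to deduce this directly from Theorem \ref{theo_cofibration} by specializing to the singleton subspace $A=\{x_0\}$. First I would observe that, since $X$ is a connected finite space and $\{x_0\}$ is a non-empty subspace, Theorem \ref{theo_cofibration} applies verbatim: the inclusion $i\colon\{x_0\}\hookrightarrow X$ is a cofibration if and only if there exists a retraction $r\colon X\to\{x_0\}$ of $i$ with $ir\leq\id_X$. Thus well-pointedness of $(X,x_0)$ is equivalent to the existence of such an $r$, and the entire problem reduces to analyzing this condition.

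The key observation, and the step that makes everything collapse, is that a one-point codomain admits only one function. The unique map $r\colon X\to\{x_0\}$ is the constant map $r(x)=x_0$, which is automatically continuous and automatically a retraction of $i$, since $ri(x_0)=r(x_0)=x_0$. Consequently, the existence of $r$ is never in question, and the only real content of the cofibration condition is the inequality $ir\leq\id_X$. Now $ir\colon X\to X$ is precisely the constant function at $x_0$, so by the characterization of the order on $X^{X}$ recalled in the Preliminaries, namely that $f\leq g$ holds in $X^{X}$ if and only if $f(x)\leq g(x)$ for every $x$, the condition $ir\leq\id_X$ says exactly that $x_0\leq x$ for every $x\in X$. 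This establishes the first equivalence.

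For the \emph{in particular} assertion I would simply invoke that a finite T$_0$--space is a poset, so its specialization preorder is antisymmetric. Under that hypothesis the statement ``$x_0\leq x$ for every $x\in X$'' is literally the statement that $x_0$ is the (necessarily unique) minimum of $X$, which completes the proof.

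I expect essentially no genuine obstacle here: once Theorem \ref{theo_cofibration} is available, the argument is immediate. If I had to name the one place that deserves a moment's care, it is the recognition that for a singleton target the retraction is forced and its mere existence is never the issue, so that the whole burden of the criterion falls on the order inequality $ir\leq\id_X$. I would also make a point of noting explicitly that connectedness of $X$ is what licenses the use of Theorem \ref{theo_cofibration} in the first place; for a general (possibly disconnected) finite space one would instead reduce to this case component by component, exactly as in Remark \ref{rem_connected_components}.
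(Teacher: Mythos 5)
Your proposal is correct and follows exactly the paper's own argument: apply Theorem \ref{theo_cofibration} to the subspace $\{x_0\}$, note that the unique map $r\colon X\to\{x_0\}$ is automatically a retraction, and observe that the condition $ir\leq\id_X$ is precisely $x_0\leq x$ for all $x\in X$. The paper states this in a single sentence; your write-up is the same proof with the details made explicit.
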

\begin{proof}
	By \ref{theo_cofibration}, the space $(X,x_0)$ is well-pointed if and only if the only map $r\colon X\to \{x_0\}$ satisfies that $r(x)\leq x$ for every $x\in X$.
\end{proof}

In the case of connected finite T$_0$--spaces, non-trivial cofibrations are essentially dbp--retracts as the following result and its corollaries state.

\begin{prop}
	Let $X$ be a connected finite T$_0$--space and let $A$ be a non-empty subset of $X$. Then, the inclusion $i\colon A\to X$ is a cofibration if and only if $A$ is a dbp--retract of $X$.
	\label{coro_cof_iff_dbpr}
\end{prop}

\begin{proof}
	Immediate from \ref{theo_dbpr_equivalences} and \ref{theo_cofibration}.
\end{proof}

\begin{coro}
	\label{coro_cof_iff_fX_dbpr_Y}
	Let $X$ and $Y$ be finite T$_0$--spaces such that $Y$ is connected and $X\neq\varnothing$. Let $f\colon X\to Y$ be any function. Then $f$ is a cofibration if and only if $f$ is a homeomorphism onto its image and $f(X)$ is a dbp--retract of $Y$.
\end{coro}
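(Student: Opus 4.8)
The plan is to reduce the statement to Proposition \ref{coro_cof_iff_dbpr} by factoring $f$ through its image. I would write $A=f(X)$, which is a non-empty subspace of $Y$ since $X\neq\varnothing$, and let $\bar f\colon X\to A$ denote the corestriction of $f$ and $j\colon A\hookrightarrow Y$ the inclusion, so that $f=j\bar f$. The whole argument then rests on three standard facts about Hurewicz cofibrations together with the results already available: (i) every cofibration is a homeomorphism onto its image (quoted above); (ii) every homeomorphism is a cofibration; and (iii) cofibrations are closed under composition.

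For the forward implication I would assume that $f$ is a cofibration. By (i), $f$ is a homeomorphism onto $A$, which is precisely the assertion that $\bar f$ is a homeomorphism, so one of the two desired conclusions is immediate. To obtain the other, I would observe that $j=f\bar f^{-1}$, exhibiting $j$ as the composite of the cofibration $f$ with the homeomorphism $\bar f^{-1}$; by (ii) and (iii), $j$ is therefore a cofibration. Since $Y$ is connected and $A$ is non-empty, Proposition \ref{coro_cof_iff_dbpr} then yields that $A=f(X)$ is a dbp--retract of $Y$.

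For the converse I would assume that $f$ is a homeomorphism onto its image and that $A=f(X)$ is a dbp--retract of $Y$. Proposition \ref{coro_cof_iff_dbpr} makes the inclusion $j\colon A\hookrightarrow Y$ a cofibration, while $\bar f\colon X\to A$ is a homeomorphism and hence a cofibration by (ii). Consequently $f=j\bar f$ is a composite of cofibrations and is itself a cofibration by (iii).

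I do not anticipate any serious obstacle: once the factorization $f=j\bar f$ is in place, everything follows formally from Proposition \ref{coro_cof_iff_dbpr} and the standard closure properties of cofibrations. The one point that needs a little care is the forward direction, where I must pass from ``$f$ is a cofibration'' to ``the inclusion $j$ is a cofibration''; the clean way to do this is to rewrite $j=f\bar f^{-1}$ and invoke closure under composition with the homeomorphism $\bar f^{-1}$, rather than attempting to argue about $j$ directly.
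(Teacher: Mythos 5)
Your proposal is correct and follows essentially the same route as the paper: the corollary is stated there without a separate proof precisely because it reduces, via the factorization of $f$ through its image, to Proposition \ref{coro_cof_iff_dbpr} together with the standard facts that cofibrations are homeomorphisms onto their images, that homeomorphisms are cofibrations, and that cofibrations compose. Your explicit writing of $j=f\bar f^{-1}$ in the forward direction is a clean way of making this reduction precise.
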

\begin{coro}
	Let $X$ be a connected finite T$_0$--space. Then, every cofibration $f\colon A\to X$ with $A\neq\varnothing$ is a homeomorphism if and only if $X$ does not have down beat points. 
\end{coro}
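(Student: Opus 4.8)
The plan is to prove the biconditional by relating the absence of down beat points in $X$ to the requirement that every nontrivial cofibration into $X$ be a homeomorphism. By Proposition \ref{coro_cof_iff_dbpr}, a cofibration $f\colon A\to X$ with $A\neq\varnothing$ corresponds (after identifying $A$ with its image, which is forced since every cofibration is a homeomorphism onto its image) to a dbp--retract of $X$. Thus the statement reduces to: every dbp--retract of $X$ equals $X$ if and only if $X$ has no down beat points. I would first reduce to this purely combinatorial assertion and then argue each direction separately.

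For the forward direction, I would argue by contrapositive: suppose $X$ has a down beat point $x$. Then by Remark \ref{rem_retract_beat_points}, the subspace $X-\{x\}$ is obtained from $X$ by removing a single down beat point, so it is a dbp--retract of $X$ that is proper and non-empty (note $A=X-\{x\}$ is non-empty because a connected finite T$_0$--space with a down beat point has at least two points). The inclusion $i\colon X-\{x\}\hookrightarrow X$ is then a cofibration by Proposition \ref{coro_cof_iff_dbpr}, but it is not a homeomorphism since it is not surjective. This exhibits a non-trivial cofibration into $X$ that is not a homeomorphism, contradicting the hypothesis.

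For the reverse direction, suppose $X$ has no down beat points and let $f\colon A\to X$ be any cofibration with $A\neq\varnothing$. Since every cofibration is a homeomorphism onto its image, I may identify $A$ with a subspace of $X$ via $f$, and by Proposition \ref{coro_cof_iff_dbpr} this subspace is a dbp--retract of $X$. But a dbp--retract is built by successively removing down beat points of the intermediate subspaces, and the first such removal would require a down beat point of $X$ itself. Since $X$ has none, the sequence of removals has length zero, forcing $A=X$; hence $f$ is a homeomorphism.

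The main subtlety to handle carefully is the very first removal in the definition of a dbp--retract: the claim rests on the observation that the initial space in the defining sequence is $X$ itself, so if $A\subsetneq X$ then at least one down beat point of $X$ must be removed at the outset. I expect this to be the only point requiring attention, and it follows directly from the definition of dbp--retract together with the fact that a nonempty connected space admitting a proper dbp--retract must have at least two points. The remaining steps are immediate applications of the cited results.
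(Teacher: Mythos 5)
Your proof is correct and takes essentially the same route as the paper, which states this result as an immediate corollary of Proposition \ref{coro_cof_iff_dbpr} (equivalently \ref{coro_cof_iff_fX_dbpr_Y}): cofibrations with non-empty domain into a connected finite T$_0$--space $X$ correspond, after identifying the domain with its image, to dbp--retracts of $X$, and by the definition of dbp--retract a proper one can exist precisely when $X$ itself has a down beat point to remove at the first step. Your spelled-out details (the non-emptiness of $X-\{x\}$, and the length-zero sequence argument in the reverse direction) are exactly the intended unpacking of that corollary.
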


We will prove now that in order to determine if a map between finite spaces is a cofibration we can always reduce our analysis to maps between finite T$_0$--spaces. To this end, we need some simple results which are contained in the following remark.

\begin{rem}\ \label{rem_T0-ization}
 Let $X$ be a finite topological space and let $q_X\colon X\to X_0$ be the quotient map.
\begin{enumerate}
 \item  The set of connected components of $X_0$ is $\{q_X(C)\tq C \textnormal{ is a connected component of $X$} \}$.
 \item Let $A$ be a subspace of $X$ and let $i\colon A\to X$ be the inclusion map. Then the map $i_0\colon A_0\to X_0$ is an embedding and $A_0$ is canonically homeomorphic to $q_X(A)$. In addition, $q_X(A\cap C)=q_X(A)\cap q_X(C)$ for each connected component $C$ of $X$.
\end{enumerate}
\end{rem}

The following proposition allows the reduction to the case of finite T$_0$--spaces in the characterization of cofibrations.

\begin{prop} \label{prop_i_cof_iff_i0_cof}
Let $X$ be a finite topological space and let $A$ be a subspace of $X$. Then, the inclusion $i\colon A\to X$ is a cofibration if and only if the map $i_0\colon A_0\to X_0$ is a cofibration.
\end{prop}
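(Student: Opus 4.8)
The plan is to reduce to the situation of Theorem~\ref{theo_cofibration} and then transport the retraction characterization through the functor $T$. Concretely, I would first use Remarks~\ref{rem_connected_components} and \ref{rem_T0-ization} to reduce to the case in which $X$ is connected and $A$ is non-empty, and then show that a retraction $r\colon X\to A$ of $i$ with $ir\leq\id_X$ exists if and only if a retraction $s\colon X_0\to A_0$ of $i_0$ with $i_0 s\leq\id_{X_0}$ exists; since $X_0$ is connected and $A_0$ is non-empty, the proposition then follows from Theorem~\ref{theo_cofibration} applied on both sides.

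For the reduction, by Remark~\ref{rem_connected_components} the map $i$ is a cofibration if and only if $i|_{A\cap C}\colon A\cap C\to C$ is a cofibration for every connected component $C$ of $X$ with $A\cap C\neq\varnothing$. By Remark~\ref{rem_T0-ization}, the connected components of $X_0$ are exactly the sets $q_X(C)$, the subspace $A_0\cap q_X(C)$ is canonically homeomorphic to $q_X(A\cap C)=(A\cap C)_0$, and the corresponding restriction of $i_0$ is the image under $T$ of $i|_{A\cap C}$; moreover $A\cap C\neq\varnothing$ if and only if $A_0\cap q_X(C)\neq\varnothing$. Applying Remark~\ref{rem_connected_components} to $i_0$ as well, it therefore suffices to prove the equivalence componentwise, i.e.\ under the assumption that $X$ (hence $X_0$) is connected and $A$ (hence $A_0$) is non-empty.

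For the forward implication I would take the $T$-image: given a retraction $r$ of $i$ with $ir\leq\id_X$, functoriality of $T$ yields $r_0 i_0=(ri)_0=\id_{A_0}$, while the equivalence $ir\leq\id_X\Leftrightarrow (ir)_0\leq\id_{X_0}$ recalled in the preliminaries gives $i_0 r_0=(ir)_0\leq\id_{X_0}$; thus $i_0$ is a cofibration by Theorem~\ref{theo_cofibration}.

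The converse is the main point, and the obstacle is that the naive lift $\psi\circ s\circ q_X$ of a retraction downstairs is only a retraction up to the relation $\sim$, not on the nose. I would circumvent this by forcing $r$ to be the identity on $A$. Identifying $A_0$ with $q_X(A)$ and $i_0$ with the inclusion, let $s\colon X_0\to q_X(A)$ be a retraction with $s(\xi)\leq\xi$ for all $\xi$, and choose any function $\psi$ assigning to each class in $q_X(A)$ a representative lying in $A$. Define $r\colon X\to A$ by $r(x)=x$ for $x\in A$ and $r(x)=\psi(s(q_X(x)))$ for $x\notin A$. The crucial observation is that the single identity $q_X\circ r=s\circ q_X$ holds on all of $X$: off $A$ this is because $q_X\psi$ is the identity on classes, and on $A$ it is because $s$ fixes $q_X(A)$ pointwise, so $s(q_X(a))=q_X(a)=q_X(r(a))$. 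From this identity everything follows at once, using that $q_X$ reflects the order ($u\leq v$ in $X$ iff $q_X(u)\leq q_X(v)$ in $X_0$): if $x\leq y$ then $q_X(r(x))=s(q_X(x))\leq s(q_X(y))=q_X(r(y))$, so $r$ is order-preserving and hence continuous; $q_X(r(x))=s(q_X(x))\leq q_X(x)$ gives $ir\leq\id_X$; and $ri=\id_A$ holds by construction. Theorem~\ref{theo_cofibration} then shows that $i$ is a cofibration, completing the argument.
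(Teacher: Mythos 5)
Your proof is correct, and for the substantive (converse) direction it is essentially the paper's own argument: you lift the retraction $s$ of $i_0$ to a map $r\colon X\to A$ by forcing $r$ to be the identity on $A$ and using a choice of representatives off $A$ (the paper's section $j_A$ of $q_A$ plays exactly the role of your $\psi$), and you verify continuity, $ri=\id_A$ and $ir\leq\id_X$ through the fact that $q_X$ reflects the order. Your packaging of the verification into the single identity $q_X\circ r=s\circ q_X$ is a slightly cleaner organization of the same computation, which the paper carries out case by case (comparing $r(a)$ with $r(x)$ for $a\in A$, $x\in X-A$). Where you genuinely diverge is the forward implication: the paper dispatches it in one line, with no connectivity reduction and no appeal to Theorem \ref{theo_cofibration}, by observing that $i_0$ is a retract of $i$ in the arrow category and that a retract of a cofibration is a cofibration; you instead run the componentwise reduction in both directions and apply the functor $T$ to the retraction supplied by Theorem \ref{theo_cofibration}, using the fact from the preliminaries that $f\leq g$ if and only if $f_0\leq g_0$. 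Both routes are valid; the paper's retract argument is more robust, since it uses only general cofibration theory and nothing about finiteness, while yours has the virtue of keeping the entire proof inside the order-theoretic framework of Theorem \ref{theo_cofibration}, at the cost of making the easy direction depend on the componentwise reduction and on the identifications of Remark \ref{rem_T0-ization}.
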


\begin{proof}
It is easy to prove that the map $i_0\colon A_0\to X_0$ is a retract of the inclusion $i\colon A\to X$. Thus, if $i$ is a cofibration then $i_0$ is also a cofibration.

For the converse, suppose first that $X$ is connected and $A\neq\varnothing$.

Let $q_A\colon A\to A_0$ and $q_X\colon X\to X_0$ be the canonical quotient maps and let $j_A\colon A_0\to A$ be a section of $q_A$. 
By \ref{theo_cofibration}, there exists a retraction $\rho$ of $i_0$ such that $i_0\rho\leq \id_{X_0}$. We define $r\colon X\to A$ by
\begin{displaymath}
r(x)=\begin{cases}x&\text{if $x\in A$,}\\j_A\rho q_X(x)&\text{if $x\in X-A$.}\end{cases}
\end{displaymath}

Observe that $j_A\rho q_X(a)\sim a$ for every $a\in A$. It follows that 
\begin{displaymath}
	r(a)=a\leq j_A\rho q_X(a)\leq j_A\rho q_X(x)=r(x)
\end{displaymath}
for every $a\in A$ and every $x\in X-A$ such that $a\leq x$, and that 
\begin{displaymath}
r(x)=j_A\rho q_X(x)\leq j_A\rho q_X(a)\leq a=r(a)
\end{displaymath}
for every $x\in X-A$ and every $a\in A$ such that $x\leq a$. Hence, $r$ is continuous.

It is clear that $ri=\id_A$. 
On the other hand, $ir(x)=x$ for every $x\in A$ and, since $q_Xi=i_0q_A$, then
\begin{displaymath}
q_X ir(x)=q_Xij_A\rho q_X(x)=i_0q_Aj_A\rho q_X(x)=i_0\rho q_X(x)\leq q_X(x)
\end{displaymath}
for every $x\in X-A$. Hence, $ir\leq \id_X$. Then, $i$ is a cofibration by \ref{theo_cofibration}.

The general case follows applying \ref{rem_connected_components} and \ref{rem_T0-ization}.
\end{proof}

The following result follows easily from \ref{prop_i_cof_iff_i0_cof}

\begin{prop}\label{prop_f_cof_iff_subesp_and_f0_cof}
Let $X$ and $Y$ be finite topological spaces and let $f\colon X\to Y$ be a continuous map. Then $f$ is a cofibration if and only if $f$ is an embedding and $f_0\colon X_0\to Y_0$ is a cofibration.
\end{prop}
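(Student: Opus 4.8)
The plan is to reduce $f$ to the inclusion of its image and then invoke Proposition \ref{prop_i_cof_iff_i0_cof}. The two facts I will use repeatedly are that every cofibration is a homeomorphism onto its image (recalled at the beginning of this section) and that the cofibration property is invariant under precomposition with a homeomorphism: if $h\colon X\to A$ is a homeomorphism and $g\colon A\to Y$ is any continuous map, then $gh$ is a cofibration if and only if $g$ is a cofibration. This last fact is immediate from the definition of the homotopy extension property, since precomposing a given homotopy $H\colon X\times I\to Z$ with $h^{-1}\times\id$ (respectively with $h\times\id$) turns an extension problem for $gh$ into one for $g$ and conversely.

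Suppose first that $f$ is a cofibration. Then $f$ is a homeomorphism onto its image $A=\im f$, so $f$ is an embedding and factors as $f=ih$, where $i\colon A\hookrightarrow Y$ is the inclusion and $h\colon X\to A$ is a homeomorphism. By the invariance fact above, $i$ is a cofibration, and hence $i_0\colon A_0\to Y_0$ is a cofibration by Proposition \ref{prop_i_cof_iff_i0_cof}. Applying the functor $T$ to the factorization $f=ih$ gives $f_0=i_0h_0$, where $h_0$ is a homeomorphism because $T$, being a functor, preserves isomorphisms. Using the invariance fact once more, $f_0=i_0h_0$ is a cofibration.

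Conversely, suppose that $f$ is an embedding and that $f_0$ is a cofibration. As before, write $f=ih$ with $i\colon A\hookrightarrow Y$ the inclusion of $A=\im f$ and $h\colon X\to A$ a homeomorphism, so that $f_0=i_0h_0$ with $h_0$ a homeomorphism. Since $f_0$ is a cofibration, cancelling the homeomorphism $h_0$ shows that $i_0$ is a cofibration, whence the inclusion $i$ is a cofibration by Proposition \ref{prop_i_cof_iff_i0_cof}; therefore $f=ih$ is a cofibration. The only delicate point in the whole argument is the invariance of the cofibration property under precomposition with homeomorphisms, but this is a routine consequence of the homotopy extension property as indicated above; everything else is a direct application of Proposition \ref{prop_i_cof_iff_i0_cof} together with the functoriality of $T$ and the fact that a cofibration is an embedding.
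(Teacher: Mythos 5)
Your proof is correct and follows essentially the route the paper intends: the paper gives no explicit argument, stating only that the result ``follows easily from'' Proposition \ref{prop_i_cof_iff_i0_cof}, and your factorization $f=ih$ through the image, together with the invariance of the cofibration property under precomposition with homeomorphisms and the functoriality of $T$, is precisely the natural way to fill in that reduction (compare the first paragraph of Remark \ref{rem_algorithm_cofibr}, where the same decomposition is used). Nothing is missing; you have simply made explicit the routine details the authors chose to omit.
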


Note that a map $f\colon X\to Y$ between finite spaces might not be a cofibration even if $f_0\colon X_0\to Y_0$ is a cofibration. For example, let $X=\{0,1\}$ with the indiscrete topology, let $Y$ be the singleton and let $f\colon X\to Y$ be the only possible map. Then the map $f\colon X\to Y$ is not a cofibration since it is not injective but the map $f_0\colon X_0\to Y_0$ is a homeomorphism.

\begin{rem}\label{rem_algorithm_cofibr}
Combining some of the results developed above we obtain a simple algorithm for determining whether a function between finite topological spaces is a cofibration, which is described below.

Let $X$ and $Y$ be finite topological spaces and let $f\colon X\to Y$ be a function. Clearly $f$ is a cofibration if and only if the range restriction $f|^{f(X)}\colon X\to f(X)$ is a homeomorphism and the inclusion map $f(X)\hookrightarrow Y$ is a cofibration. Observe that $f|^{f(X)}$ is a homeomorphism if and only if for all $x_1,x_2\in X$, $x_1\leq x_2 \Leftrightarrow f(x_1)\leq f(x_2)$.

Let $A=f(X)$. By \ref{rem_T0-ization} and \ref{prop_i_cof_iff_i0_cof} the inclusion map $A\hookrightarrow Y$ is a cofibration if and only if the inclusion map $q_Y(A)\hookrightarrow Y_0$ is a cofibration. By \ref{rem_connected_components}, this holds if and only if the inclusion map $q_Y(A)\cap C \hookrightarrow C $ is a cofibration for each connected component $C$ of $Y_0$ such that $q_Y(A)\cap C\neq\varnothing$. Now, for each connected component $C$ of $Y_0$ such that $q_Y(A)\cap C\neq\varnothing$, the inclusion map $q_Y(A)\cap C \hookrightarrow C $ is a cofibration if and only if $q_Y(A)\cap C$ is a dbp--retract of $C$ by \ref{coro_cof_iff_dbpr}.

Therefore, we obtain that the inclusion map $A\hookrightarrow Y$ is a cofibration if and only if $q_Y(A)\cap C$ is a dbp--retract of $C$ for each connected component $C$ of $Y_0$ such that $q_Y(A)\cap C\neq\varnothing$. This condition can be verified algorithmically, as was noted in \ref{rem_algorithm_dbpr}.
\end{rem}

As an example of application of the previous results, we will determine if certain inclusion maps regarding mapping cylinders are cofibrations. Recall that if $X$ and $Y$ are topological spaces and $f\colon X\to Y$ is a continuous map then the inclusion maps of $X$ and $Y$ into the mapping cylinder of $f$ are cofibrations.

Since the mapping cylinder of a continuous function between (non-empty) finite topological spaces is not finite, we can not apply our results to the standard mapping cylinder. However, we are interested in the discrete analog of the mapping cylinder for continuous maps between finite T$_0$--spaces which is more suitable for working in the finite setting (see \cite{barmak2011algebraic}).

\begin{definition}
	Let $X$ and $Y$ be finite T$_0$--spaces and let $f\colon X\to Y$ be a continuous function. The \emph{non-Hausdorff mapping cylinder} of $f$ is the space $B(f)$ whose underlying set is $X\coprod Y$ and whose topology is induced by the following order in $B(f)$:
	\begin{displaymath}
		z\leq z'\text{ in $B(f)$ if and only if }
		\begin{cases}
			z\leq z'\text{ in $X$}&\text{ if $z,z'\in X$,}\\
			f(z)\leq z'\text{ in $Y$}&\text{ if $z\in X$ and ${z'\in Y}$, or}\\
			z\leq z'\text{ in $Y$}&\text{ if $z,z'\in Y$.}
		\end{cases}  
	\end{displaymath}
	The canonical inclusion maps of $X$ and $Y$ into $B(f)$ will be denoted by $j_X$ and $j_Y$, respectively.
\end{definition}
\begin{rem}
	Let $f\colon X\to Y$ be a continuous function between non-empty finite T$_0$--spaces. Since $Y$ is closed in $B(f)$, from \ref{coro_closed_cofibr_is_homeo} it follows that $j_Y$ is not a cofibration.
\end{rem}

\begin{prop}
	Let $X$ and $Y$ be finite T$_0$--spaces and let $f\colon X\to Y$ be a continuous function. Then $j_Y^{\op}$ is a cofibration.
\end{prop}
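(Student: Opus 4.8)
The plan is to exhibit one explicit retraction and then feed it into the dbp--retract machinery of Section \ref{sec_bpr}. Since the inclusion of the empty space is always a cofibration, I may assume $Y\neq\varnothing$ (if $Y=\varnothing$ then $X=\varnothing$ as well, and there is nothing to prove). The guiding idea is that the structural map $f$ sends each $x\in X$ to a point $f(x)\in Y$ lying \emph{above} $x$ in $B(f)$, hence \emph{below} $x$ in $B(f)^{\op}$, which is exactly the direction one needs for a retraction dominated by the identity.

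Concretely, I would define $r\colon B(f)^{\op}\to Y$ by $r(z)=z$ if $z\in Y$ and $r(z)=f(z)$ if $z\in X$. First I would verify that $r$ is continuous; equivalently, that $r$ is order-preserving as a map $B(f)\to Y$ (reversing the orders on both source and target does not affect this). This is a short case analysis over the three types of comparabilities $z\leq z'$ in $B(f)$, using that $f$ is order-preserving and the defining order of $B(f)$. It is clear that $rj_Y^{\op}=\id_Y$.

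The key point is the inequality $j_Y^{\op}r\leq\id_{B(f)^{\op}}$. Unwinding the order reversal, this amounts to $r(z)\geq z$ in $B(f)$ for every $z$. For $z\in Y$ this is immediate, and for $z\in X$ it is precisely the relation $x\leq f(x)$ in $B(f)$, which holds because $f(x)\leq f(x)$ in $Y$. Having produced such an $r$, Theorem \ref{theo_dbpr_equivalences} (implication $(4)\Rightarrow(1)$) shows that $Y$ is a dbp--retract of $B(f)^{\op}$. To conclude that $j_Y^{\op}$ is a cofibration I would then pass to connected components via Remark \ref{rem_connected_components}: for each connected component $C$ of $B(f)^{\op}$ with $C\cap Y\neq\varnothing$, and each $z\in C$, one has $U^{B(f)^{\op}}_z\subseteq C$, so $U^C_z\cap(Y\cap C)=U^{B(f)^{\op}}_z\cap Y$ has a maximum by Theorem \ref{theo_Ux_cap_A}; hence $Y\cap C$ is a dbp--retract of $C$, and the inclusion $C\cap Y\hookrightarrow C$ is a cofibration by Proposition \ref{coro_cof_iff_dbpr}.

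The only genuine obstacle is this final reduction. The construction of $r$ is elementary, but $B(f)^{\op}$ need not be connected (its components are governed by those of $Y$), so one cannot invoke Theorem \ref{theo_cofibration} directly. One must therefore either restrict $r$ to each component or, as above, route the argument through the dbp--retract characterization, where the clopenness of components guarantees $U_z$ stays inside the component of $z$. All remaining verifications are routine bookkeeping around the order reversal between $B(f)$ and $B(f)^{\op}$.
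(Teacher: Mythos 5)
Your proof is correct and takes essentially the same route as the paper: you construct the identical retraction $r\colon B(f)\to Y$ (the identity on $Y$ and $f$ on $X$), verify continuity by the same case analysis, and use the same key inequality $j_Yr\geq \id_{B(f)}$, i.e.\ $j_Y^{\op}r^{\op}\leq \id_{B(f)^{\op}}$. The only divergence is the endgame: the paper concludes directly ``by Theorem \ref{theo_cofibration}'', tacitly leaving to the reader the reduction to the connected non-empty case, whereas you route the conclusion through Theorem \ref{theo_dbpr_equivalences}, Theorem \ref{theo_Ux_cap_A} and Remark \ref{rem_connected_components}; this extra care is legitimate (indeed slightly more scrupulous than the paper itself), since $B(f)^{\op}$ need not be connected when $Y$ is not, and Theorem \ref{theo_cofibration} as stated requires connectedness.
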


\begin{proof}
	As in the proof of Lemma 2.8.2 of \cite{barmak2011algebraic},
	let $j=j_Y\colon Y\to B(f)$ be the inclusion map and let $r\colon B(f)\to Y$ defined by 
	\begin{displaymath}
		r(z)=\begin{cases}f(z)&\text{ if $z\in X,$}\\z&\text{ if $z\in Y.$}\end{cases} 
	\end{displaymath}
	If $x\in X$ and $y\in Y$ are such that $x\leq y$ in $B(f)$, then $f(x)\leq y$ and hence $r(x)=f(x)\leq y=r(y)$. Therefore, $r$ is continuous.
	It is clear that $rj=\id_Y$.
	
	Now, $jr(x)=f(x)\geq x$ in $B(f)$ for every $x\in X$. Thus, $jr\geq\id_{B(f)}$. The result follows from \ref{theo_cofibration}.
\end{proof}

The following result is already present in the proof of Proposition 4.6.6 of \cite{barmak2011algebraic} with a different terminology. We give here a proof of it using our tools.

\begin{prop} \label{prop_X_dbpr_Bf}
	Let $X$ and $Y$ be finite T$_0$--spaces and let $f\colon X\to Y$ be a continuous function. Then $X$ is a dbp--retract of $B(f)$ if and only if $f^{-1}(U^{Y}_y)$ has a maximum for every $y\in Y$.
\end{prop}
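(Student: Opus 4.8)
The plan is to apply Theorem~\ref{theo_Ux_cap_A} to the subspace $X$ of the ambient space $B(f)$. First I would observe that, inside $B(f)$, the complement of $X$ is exactly $Y$, and that the order of $B(f)$ restricted to $X$ coincides with the order of $X$ by the very definition of the non-Hausdorff mapping cylinder. Consequently, Theorem~\ref{theo_Ux_cap_A} (applied with ambient space $B(f)$ and subspace $X$) asserts that $X$ is a dbp--retract of $B(f)$ if and only if $U^{B(f)}_z\cap X$ has a maximum for every $z\in B(f)-X=Y$.

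The only real computation is to describe $U^{B(f)}_y\cap X$ explicitly for $y\in Y$. By the definition of the order on $B(f)$, an element $z\in X$ satisfies $z\leq y$ in $B(f)$ if and only if $f(z)\leq y$ in $Y$, that is, if and only if $z\in f^{-1}(U^{Y}_y)$. Hence
\begin{displaymath}
	U^{B(f)}_y\cap X=f^{-1}(U^{Y}_y)
\end{displaymath}
for every $y\in Y$. Since the $B(f)$--order and the $X$--order agree on $X$, this set has a maximum in $B(f)$ if and only if it has a maximum as a subposet of $X$.

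Combining the two observations, $X$ is a dbp--retract of $B(f)$ if and only if $f^{-1}(U^{Y}_y)$ has a maximum for every $y\in Y$, which is precisely the claim. I do not expect any genuine obstacle: the statement reduces to a direct application of Theorem~\ref{theo_Ux_cap_A}, and the only step requiring a moment of care is the identification $U^{B(f)}_y\cap X=f^{-1}(U^{Y}_y)$, which is immediate from the definition of the order on the non-Hausdorff mapping cylinder.
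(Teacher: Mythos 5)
Your proof is correct and is essentially identical to the paper's: both reduce the claim to Theorem~\ref{theo_Ux_cap_A} via the identification $U^{B(f)}_y\cap X=f^{-1}(U^{Y}_y)$ for $y\in Y$. Your write-up merely spells out the details (that $B(f)-X=Y$ and that the $B(f)$--order restricts to the $X$--order) which the paper leaves implicit.
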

\begin{proof}
Note that $f^{-1}(U^{Y}_y)=U^{B(f)}_y\cap X$ for every $y\in Y$. The result follows from \ref{theo_Ux_cap_A}.
\end{proof}

\begin{coro}
Let $X$ and $Y$ be finite T$_0$--spaces such that $Y$ is connected and $X\neq\varnothing$. Let $f\colon X\to Y$ be a continuous map. Then the inclusion map $j_X\colon X \to B(f)$ is a cofibration if and only if $f^{-1}(U^{Y}_y)$ has a maximum for every $y\in Y$.
\end{coro}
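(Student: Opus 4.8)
The plan is to recognize this statement as a direct combination of two already-established results: the characterization of cofibrations by dbp--retracts (Proposition \ref{coro_cof_iff_dbpr}) and the explicit criterion for $X$ being a dbp--retract of $B(f)$ (Proposition \ref{prop_X_dbpr_Bf}). The map $j_X$ is simply the inclusion of the subspace $X$ into $B(f)$, so to apply Proposition \ref{coro_cof_iff_dbpr} I must first check that $B(f)$ is a connected finite T$_0$--space and that $X$ is a non-empty subspace of it.

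Connectivity of $B(f)$ is the only point that genuinely requires an argument, and it is immediate. Since $X\neq\varnothing$ there is some $x\in X$, whence $f(x)\in Y$ and so $Y\neq\varnothing$. Moreover, for every $x\in X$ we have $f(x)\leq f(x)$ in $Y$, which by the definition of the order in $B(f)$ means precisely that $x\leq f(x)$ in $B(f)$. Thus each point of $X$ is comparable to a point of $Y$ and therefore lies in the same connected component as that point. Since $Y$ is connected and non-empty, it follows that all of $B(f)=X\coprod Y$ lies in a single connected component, so $B(f)$ is connected.

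With $B(f)$ connected and $X$ a non-empty subspace, Proposition \ref{coro_cof_iff_dbpr} applies, with $B(f)$ playing the role of the ambient space and $X$ that of the subspace, and yields that $j_X\colon X\to B(f)$ is a cofibration if and only if $X$ is a dbp--retract of $B(f)$. Finally, Proposition \ref{prop_X_dbpr_Bf} translates this last condition into the statement that $f^{-1}(U^{Y}_y)$ has a maximum for every $y\in Y$. Chaining these two equivalences gives the desired result.

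Since everything reduces to two previously proved propositions, there is no substantial obstacle to overcome; the only step deserving care is the verification that $B(f)$ is connected, as this is exactly the hypothesis that licenses the use of Proposition \ref{coro_cof_iff_dbpr}, whose ambient space is required to be connected.
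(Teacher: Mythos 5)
Your proof is correct and follows essentially the same route as the paper: it cites Proposition \ref{coro_cof_iff_dbpr} to identify the cofibration condition with $X$ being a dbp--retract of $B(f)$, and then Proposition \ref{prop_X_dbpr_Bf} to translate that into the condition on the sets $f^{-1}(U^{Y}_y)$. Your explicit verification that $B(f)$ is connected (via $x\leq f(x)$ in $B(f)$ for every $x\in X$) is a detail the paper leaves implicit, and it is exactly the right check to justify invoking \ref{coro_cof_iff_dbpr}.
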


\begin{proof}
By \ref{coro_cof_iff_dbpr}, the map $j_X\colon X \to B(f)$ is a cofibration if and only if $X$ is a dbp--retract of $B(f)$. The result follows from \ref{prop_X_dbpr_Bf}.
\end{proof}

\bibliographystyle{acm}
\bibliography{references}

\end{document}